\newcommand{\UP}{\blacktriangle}
\newcommand{\DOWN}{\blacktriangledown}
\theoremstyle{plain}
\newtheorem{theorem}{Theorem}[section]
\newtheorem{proposition}[theorem]{Proposition}
\newtheorem{lemma}[theorem]{Lemma}
\newtheorem{corollary}[theorem]{Corollary}
\theoremstyle{definition}
\newtheorem{definition}[theorem]{Definition}
\newtheorem{example}[theorem]{Example}
\newtheorem{remark}[theorem]{Remark}
\begin{document}
 
\title[Prime filter structures of pseudocomplemented Kleene algebras]{Prime filter structures of pseudocomplemented 
Kleene algebras and representation by rough sets}

\author{Jouni J{\"a}rvinen}
\address[J.~J{\"a}rvinen]{Department of Mathematics and Statistics, University of Turku, 20014 Turku, Finland}
\email{jjarvine@utu.fi}

\author{S{\'a}ndor Radeleczki}
\address[S.~Radeleczki]{Institute of Mathematics, University of Miskolc, 3515~Miskolc-Egyetemv{\'a}ros, Hungary}
\email{matradi@uni-miskolc.hu} 
           
\thanks{The work of the second author was carried out as a part of the EFOP-3.6.1-16-00011 ``Younger and
Renewing University -- Innovative Knowledge City'' project implemented in the framework of the
Sz{\'e}chenyi 2020 program, supported by the European Union, co-financed by the European Social
Fund.}
           
\date{}

\subjclass[2010]{Primary 06B15; Secondary 06D15, 06D30, 68T37}

\keywords{Pseudocomplemented Kleene algebra, regular double $p$-algebra, Stone identity,
prime filter, rough set, tolerance induced by an irredundant covering}

\begin{abstract}
We introduce Kleene--Varlet spaces as partially ordered sets equipped with a polarity satisfying certain additional conditions. 
By applying Kleene--Varlet spaces, we prove that each regular pseudocomplemented Kleene algebra is isomorphic to a subalgebra of the rough set
regular pseudocomplemented Kleene algebra defined by a tolerance induced by an irredundant covering. We also characterize the Kleene--Varlet 
spaces corresponding to the regular pseudocomplemented Kleene algebras satisfying the Stone identity.
\end{abstract}

\maketitle
\section{Introduction to rough set algebras} \label{Sec:Intro}

Rough set theory was introduced by Z.~Pawlak in \cite{Pawl82}. His idea was to develop a formalism for dealing with vague concepts and sets. In rough set
theory it is assumed that our knowledge is restricted by an indistinguishability relation. Originally Pawlak defined an \emph{indistinguishability relation} 
as an equivalence $E$ such that two elements of a universe of discourse $U$ are $E$-equivalent if we cannot distinguish these two elements by their properties known by us. 
For instance, we may know some properties of human beings $U$ such as age, gender, height, and weight. Then $x \, E \, y$ means that the persons 
$x$ and $y$ are indistinguishable by these properties.

Each subset $X$ of $U$ can now be ``approximated'' by using the indistinguishability relation $E$. Let us denote the equivalence class
of $x$ by $[x]_E$, that is, $[x]_E = \{ y \in U \mid x \, E \, y\}$. The \emph{lower approximation}
\[X^\DOWN = \{ x \in U \mid {[x]_E} \subseteq X \} \]
of $X$ may be viewed as the set of elements belonging certainly to $X$ in view of the knowledge $E$, because if $x \in X^\DOWN$, then all elements 
indistinguishable from $X$ are in $X$. The \emph{upper approximation}
\[X^\UP = \{ x \in U \mid { [x]_E } \cap X \neq \emptyset \} \]
of $X$ can be seen as the set of elements belonging possible to $X$ by the means of the knowledge $E$. Indeed, if $x \in X^\UP$, then
$X$ contains at least one element indistinguishable from $x$.

Let us denote by $\wp(U)$ the \emph{powerset} of $U$, that is, $\wp(U) = \{ X \mid X \subseteq U\}$. We define a relation $\equiv$ on $\wp(U)$ by
\[ X \equiv Y \iff X^\DOWN = Y^\DOWN \text{ \ and \ } X^\UP = Y^\UP. \]
The relation $\equiv$ is an equivalence called \emph{rough equality}. If $X \equiv Y$, then the same elements belong certainly and
possibly to $X$ and $Y$ in view of the knowledge $E$. The equivalence classes of $E$ are called \emph{rough sets}.

The order-theoretical study of rough sets was initiated by T.~B.~Iwi{\'n}ski in \cite{Iwin87}. 
In his approach rough sets on $U$ are the pairs $(X^\DOWN,X^\UP)$, where $X \subseteq U$.
This is justified because if $\mathcal{C} \subseteq \wp(U)$ is a rough set as defined before, that is,
$\mathcal{C}$ is an equivalence class of $\equiv$, then $\mathcal{C}$
is uniquely determined by the pair $(X^\DOWN,X^\UP)$, where $X$ is any member of $\mathcal{C}$:
a set $Y \subseteq U$ belongs to $\mathcal{C}$ if and only if $(Y^\DOWN, Y^\UP) = (X^\DOWN,X^\UP)$.
Therefore, we call
\begin{equation*}
\mathit{RS} = \{ (X^\DOWN,X^\UP) \mid X \subseteq U \}
\end{equation*}
the \emph{set of rough sets}. The set $\mathit{RS}$ is ordered by the componentwise inclusion: 
\begin{equation*}
(X^\DOWN,X^\UP) \leq (Y^\DOWN,Y^\UP) \iff X^\DOWN \subseteq Y^\DOWN \mbox{ \ and \ } X^\UP \subseteq Y^\UP. 
\end{equation*}
Let $X^c = U \setminus X$ denote the set-theoretic \emph{complement} of $X \subseteq U$. Iwi{\'n}ski noted that the map 
${\sim} \colon \mathit{RS} \to \mathit{RS}$ defined by ${\sim} (X^\DOWN, X^\UP) = (X^{c \DOWN}, X^{c \UP}) = (X^{\UP c}, X^{\DOWN c})$ is a polarity.
A \emph{polarity} $^\bot \colon P \to P$ on an ordered set $P$ is defined so that $x^{\bot \bot} = x$ and $x \leq y$ implies $x^\bot \geq y^\bot$ for $x,y \in P$.
Such a polarity is an order-isomorphism from $(P, \leq)$ to $(P,\geq)$. Hence $P$ is isomorphic to its dual.

It was proved in \cite{PomPom88} that $\mathit{RS}$  is a complete  sublattice of $\wp(U) \times\wp(U)$ ordered by the coordinatewise set-inclusion relation, which 
means that $\mathit{RS}$ is an algebraic completely distributive lattice such that
\begin{equation} \label{Eq:RS_meet}
\bigwedge \{  ( X^\DOWN, X^\UP ) \mid X\in\mathcal{H} \}  =
\big ( \bigcap_{X \in\mathcal{H}} X^\DOWN, \bigcap_{X\in\mathcal{H}} X^\UP \big )
\end{equation}
and
\begin{equation} \label{Eq:RS_join}
\bigvee \{  ( X^\DOWN, X^\UP ) \mid X\in\mathcal{H} \}  =
\big ( \bigcup_{X\in\mathcal{H}} X^\DOWN, \bigcup_{X\in\mathcal{H}} X^\UP \big )
\end{equation}
for any $\mathcal{H} \subseteq \wp(U)$. Note that an \emph{algebraic lattice} $L$ is
a complete lattice in which each element is a join of compact elements (defined on page~\pageref{Def:Compact})
and a \emph{completely distributive lattice} is a complete lattice in which arbitrary joins distribute over arbitrary meets.
They also proved that $\mathit{RS}$ is a Stone lattice such that $(X^\DOWN, X^\UP)^* = (X^{c \DOWN}, X^{c \DOWN})$
for $X \subseteq U$. This result was improved by S.~D.~Comer \cite{Comer} by showing that 
\[ \mathbb{RS} = (\mathit{RS}, \vee, \wedge, ^*, ^+, (\emptyset,\emptyset), (U,U))\]
is a regular double Stone algebra, where  $(X^\DOWN, X^\UP)^+ = (X^{c \UP}, X^{c \UP})$ for $X \subseteq U$. More importantly, he proved
that every regular double Stone algebra is isomorphic to a subalgebra of $\mathbb{RS}$ defined by some equivalence relation.

In the literature can be found numerous studies on rough sets that are determined 
by so-called \emph{information relations} reflecting distinguishability or indistinguishability 
of the elements of the universe of discourse; see \cite{Orlowska1998} and the references therein.
The idea is that $R$ may be an arbitrary binary relation, and rough lower and 
upper approximations are then defined in terms of $R$. Let us denote 
for any $x \in U$, $R(x) = \{ y \mid x \, R \, y\}$. For all $X \subseteq U$, the \emph{lower} and \emph{upper approximations} of $X$ are defined by
\[ X^\DOWN = \{ x \in U \mid R(x) \subseteq X \} \text{ \quad and \quad }  X^\UP = \{ x \in U \mid R(x) \cap X \ne \emptyset \}, \]
respectively. The \emph{rough equality} relation, the \emph{set of rough sets} $\textit{RS\/}$, and its partial order are defined as in case of equivalences. 

As shown in \cite{JRV09}, if $R$ is a quasiorder (a reflexive and transitive binary relation) on $U$, then 
$\mathit{RS}$ is a complete sublattice of $\wp(U) \times \wp(U)$ ordered by the componentwise inclusion, meaning that $\mathit{RS}$ 
is an algebraic completely  distributive lattice such that the lattice-operations are defined as in \eqref{Eq:RS_meet} and  \eqref{Eq:RS_join}
for all $\mathcal{H} \subseteq \mathit{RS}$. As in the case of equivalences, the map ${\sim} \colon (X^\DOWN, X^\UP) \mapsto (X^{c \DOWN}, X^{c \UP})$  
is a polarity on $\mathit{RS}$. In fact (see \cite{JarRad11,JPR13}), if $R$ is a quasiorder, then the algebra 
\[ \mathbb{RS} = (\mathit{RS},\vee,\wedge,\to,{\sim},0,1)\] 
forms a Nelson algebra with the operations:
\begin{align*}
 (X^\DOWN,X^\UP) \vee  (Y^\DOWN,Y^\UP)   & = (X^\DOWN \cup Y^\DOWN, X^\UP \cup Y^\UP), \\
 (X^\DOWN,X^\UP) \wedge (Y^\DOWN,Y^\UP) & = (X^\DOWN \cap Y^\DOWN, X^\UP \cap Y^\UP), \\
 (X^\DOWN,X^\UP)  \to  (Y^\DOWN,Y^\UP)  &= ( ( X^{\DOWN c} \cup Y^\DOWN)^\DOWN, X^{\DOWN c} \cup Y^\UP), \\
 {\sim}(X^\DOWN,X^\UP) &= ( X^{c \DOWN}, X^{c \UP}),\\ 
 0 &=  (\emptyset,\emptyset), \\
 1 &=  (U,U). 
\end{align*}
We proved in \cite{JarRad11} that if $\mathbb{A}$ is a Nelson algebra defined on an algebraic lattice, there exists a set $U$ and a quasiorder 
$R$ on $U$ such that $\mathbb{A}$ is isomorphic to the Nelson algebra $\mathbb{RS}$ determined by $R$. In \cite{JarRad14monteiro}, 
we generalized this representation theorem by stating that for any Nelson algebra $\mathbb{A}$, there exists a set $U$ and a 
quasiorder $R$ on $U$ such that $\mathbb{A}$ is isomorphic to a subalgebra of the Nelson algebra $\mathbb{RS}$ determined by $R$.

If $R$ is a tolerance (a reflexive and symmetric binary relation), then $\mathit{RS}$ is not necessarily a lattice \cite{Jarv99}.
A collection $\mathcal{H}$ of nonempty subsets of $U$ is called a \emph{covering} of $U$ if $\bigcup \mathcal{H} = U$.
A covering $\mathcal{H}$ is \emph{irredundant} if $\mathcal{H} \setminus \{X\}$ is not a covering of $U$ for any $X \in \mathcal{H}$.
Each covering $\mathcal{H}$ defines a tolerance $\bigcup \{ X \times X \mid X \in \mathcal{H}\}$, called the \emph{tolerance induced} 
by $\mathcal{H}$. In \cite{JarRad14}, we proved that if $R$ is a tolerance induced by an irredundant covering of $U$, then $\mathit{RS}$
is an algebraic and completely distributive lattice such that for any $\mathcal{H} \subseteq \wp(U)$,
\begin{equation} \label{Eq:RS_meet2}
\bigwedge \{  ( X^\DOWN, X^\UP ) \mid X \in\mathcal{H} \}  =
\Big ( \bigcap_{X \in\mathcal{H}} X^\DOWN, \big ( \bigcap_{X \in\mathcal{H}} X^\UP \big )^{\DOWN \UP} \Big )
\end{equation}
and
\begin{equation} \label{Eq:RS_join2}
\bigvee \{  ( X^\DOWN, X^\UP ) \mid X\in\mathcal{H} \}  =
\Big ( \big ( \bigcup_{X\in\mathcal{H}} X^\DOWN \big )^{\UP \DOWN} , \bigcup_{X\in\mathcal{H}} X^\UP \Big ).
\end{equation}
In addition, we showed in \cite{JarRad17}  that if $R$ is a tolerance induced by an irredundant covering, then
\[ (\mathit{RS},\vee,\wedge, {\sim}, ^*, 0, 1)\] 
is a pseudocomplemented Kleene algebra with the operations:
\begin{align*}
 (X^\DOWN,X^\UP) \vee  (Y^\DOWN,Y^\UP)   & = ( (X^\DOWN \cup Y^\DOWN)^{\UP \DOWN} , X^\UP \cup Y^\UP), \\
 (X^\DOWN,X^\UP) \wedge (Y^\DOWN,Y^\UP) & = (X^\DOWN \cap Y^\DOWN, (X^\UP \cap Y^\UP)^{\DOWN \UP} ), \\
 {\sim}(X^\DOWN,X^\UP) &= ( X^{c \DOWN}, X^{c \UP}),\\
 (X^{\DOWN}, X^{\UP})^* &=  (X^{c \DOWN \DOWN}, X^{c \DOWN \UP}),\\
 0 &=  (\emptyset,\emptyset), \\
 1 &=  (U,U). 
\end{align*}
We also proved in \cite{JarRad17} that if $\mathbb{L} = (L,\vee,\wedge,{\sim},{^*}, 0,1)$ is a regular pseudocomplemented Kleene
algebra defined on an algebraic lattice, then there exists a set $U$ and a tolerance $R$ induced by an irredundant
covering of $U$ such that $\mathbb{L}$ is isomorphic to the rough set pseudocomplemented Kleene algebra 
$\mathbb{RS} = (\mathit{RS},\vee,\wedge,{\sim}, {^*}, 0, 1)$  determined by $R$. 
Such an algebra can be considered as the algebraic counterpart of the three-valued Kleene logic. The role of  
this logic and its interrelation with incomplete formal contexts is discussed, for instance, in \cite{Burmeister2000}.
In this work we generalize our result by showing that if $\mathbb{L}$ is any regular pseudocomplemented Kleene algebra, 
then there exists a set $U$ and a tolerance $R$ induced by an irredundant covering of $U$ such that $\mathbb{L}$ is isomorphic 
to a subalgebra of the rough set pseudocomplemented Kleene algebra $\mathbb{RS}$ determined by $R$. Note that 
in \cite{Boffa2018} the authors show how certain sequences of approximation pairs determined by refining tolerances form 
finite centered Kleene algebras having the interpolation property. They also prove that every such a Kleene algebra is isomorphic 
to the algebra of sequences of approximation pairs of subsets of a suitable universe.

This work is structured as follows:  In the next section we recall some notions and facts related to De Morgan and Kleene algebras. 
In particular, we are interested in pseudocomplemented Kleene algebras and their regularity. In Section~\ref{Sec:VarletSpaces}, 
we introduce so called Kleene--Varlet spaces. These structures are essential tools for our main result of the section stating that
each regular pseudocomplemented Kleene algebra is isomorphic to a subalgebra of the rough set pseudocomplemented Kleene 
algebra defined by a tolerance. Section~\ref{Sec:StoneIdentity} is devoted to regular pseudocomplemented 
Kleene algebras satisfying the Stone identity $x^* \vee x^{**} = 1$. We show that there is a one-to-one correspondence between 
these algebras and regular double Stone algebras. The section ends by showing that each regular pseudocomplemented 
Kleene algebra satisfying the Stone identity is isomorphic to a subalgebra of a rough set Kleene algebra defined by an equivalence relation.
The last section considers Kleene--Varlet spaces of regular pseudocomplemented Kleene algebras satisfying the Stone identity.

\section{Regular pseudocomplemented Kleene algebras} \label{Sec:Preliminaries}

An algebra $(L, \vee, \wedge, {^*}, 0, 1)$ is a $p$-algebra if $(L, \vee, \wedge, 0, 1)$ is a bounded lattice and $^*$
is a unary operation on $L$ such that $x \wedge z = 0$ iff $z \leq x^*$. The element $x^*$ is the \emph{pseudocomplement} of $x$. 
A lattice $L$ in which each element has a pseudocomplement is called a \emph{pseudocomplemented lattice}.
It is well known that $x \leq y$ implies $x^* \geq y^*$. We also have for $x,y \in L$, 
\begin{align*}
x^* &= x^{***},\\
(x \vee y)^* &= x^* \wedge y^*, \\
(x \wedge y)^{**} &= x^{**} \wedge y^{**}.
\end{align*}

An algebra $(L, \vee, \wedge, {^*}, {^+}, 0, 1)$ is a double $p$-algebra if $(L, \vee, \wedge , ^*, 0, 1)$ is
a $p$-algebra and $(L, \vee, \wedge, {^+}, 0, 1)$ is a dual $p$-algebra  (i.e.\, $z \geq x^+$ iff $x \vee z = 1$ for all $x,y \in L$).
The element $x^+$ is the \emph{dual pseudocomplement} of $a$. If $x \leq y$, then $x^+ \geq y^+$.
In addition, 
\begin{align*}
x^+ &= x^{+++},\\
(x \wedge y)^+ &= x^+ \vee y^+,\\ 
(x \vee y)^{++} &= x^{++} \vee y^{++}. 
\end{align*}
Note that by definition $x \leq x^{**}$ and $x^{++} \leq x$. Therefore, in a double $p$-algebra $x^{++} \leq x^{**}$.

An algebra is called \emph{congruence-regular} if every congruence is determined by any class of it: two congruences are necessarily 
equal when they have a class in common. 
J.~Varlet has proved in  \cite{Varlet1972} that double $p$-algebras satisfying the condition
\begin{equation}
x^* = y^* \text{ and } x^+ = y^+ \text{ imply } x=y. \tag{M}
\end{equation}
are exactly the congruence-regular ones. It is also proved by him that for the distributive double $p$-algebras,
condition (M) is equivalent to condition
\begin{equation}
x \wedge x^+ \leq y \vee y^*. \tag{D}
\end{equation}
On the other hand, T.~Katri\v{n}\'{a}k \cite{Kat73} has shown that any 
congruence-regular double pseudocomplemented lattice is distributive. In what follows,
we use the shorter term ``regular'' instead of ``congruence-regular''.

A filter $F$ of a lattice $L$ is called \emph{proper}, if $F \ne L$. A proper filter $F$ is a \emph{prime filter} if $a \vee b \in F$ implies 
$a \in F$ or $b \in F$. The set of prime filters of $L$ is denoted by $\mathcal{F}_p(L)$, or by $\mathcal{F}_p$ if there is no danger of confusion. 
A filter $F$ is \emph{maximal} if $F$ is proper and there is no proper filter that is strictly greater than $F$. It can be shown by using
Zorn's Lemma that every proper filter can be extended to a maximal filter. It is also known that in distributive lattices,
each maximal filter is a prime filter, but the converse statement is not true in general.

Varlet \cite{Varlet1968,Varlet1972} has given for distributive double $p$-algebras the following characterization of regularity
in terms of prime filters.

\begin{proposition}\label{Prop:Varlet}
Let $(L, \vee, \wedge, {^*}, {^+}, 0, 1)$ be a distributive double $p$-algebra. The following are equivalent:
\begin{enumerate}[\normalfont (a)]
 \item $L$ is regular;
 \item Any chain of prime filters of $L$ has at most two elements.
\end{enumerate}
\end{proposition}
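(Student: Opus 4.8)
The plan is to reduce the statement, via the results of Varlet and Katri\v{n}\'ak recalled above, to the condition (D): for a distributive double $p$-algebra, being regular is equivalent to $x \wedge x^+ \leq y \vee y^*$ for all $x,y$. So it suffices to show that (D) holds if and only if every chain of prime filters of $L$ has at most two elements. Throughout I would use two standard facts about distributive lattices: if $u \not\leq v$, then some prime filter contains $u$ but not $v$; and every proper filter (resp.\ proper ideal) extends to a prime filter (resp.\ prime ideal) — the latter by the maximal-filter argument mentioned in the text together with its order-dual. Recall also that $x \vee x^+ = 1$ and $x \wedge x^* = 0$ hold by definition.

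For (a) $\Rightarrow$ (b), I would argue by contradiction. Assume (D) and suppose $P_1 \subsetneq P_2 \subsetneq P_3$ is a chain of prime filters; pick $a \in P_2 \setminus P_1$ and $b \in P_3 \setminus P_2$. From $a \vee a^+ = 1 \in P_1$ and primeness of $P_1$ (together with $a \notin P_1$) we get $a^+ \in P_1 \subseteq P_2$, hence $a \wedge a^+ \in P_2$. Now (D) gives $b \vee b^* \in P_2$, and primeness of $P_2$ (with $b \notin P_2$) forces $b^* \in P_2 \subseteq P_3$; but then $0 = b \wedge b^* \in P_3$, contradicting that $P_3$ is proper. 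Hence no chain of three prime filters exists.

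For (b) $\Rightarrow$ (a), I would again argue by contradiction: suppose (D) fails, so there are $a, b$ with $a \wedge a^+ \not\leq b \vee b^*$; fix a prime filter $Q$ with $a \wedge a^+ \in Q$ and $b \vee b^* \notin Q$, so that $a, a^+ \in Q$ while $b, b^* \notin Q$. The goal is to sandwich $Q$ strictly between two prime filters. Going up: the filter generated by $Q \cup \{b\}$ consists of the elements above some $q \wedge b$ with $q \in Q$, and it contains $0$ only if $q \wedge b = 0$, i.e.\ $q \leq b^*$, for some $q \in Q$ — impossible since $b^* \notin Q$; thus this filter is proper and extends to a prime filter $P_3 \supseteq Q$ with $b \in P_3$, whence $Q \subsetneq P_3$. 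Going down (dually): the ideal generated by $(L \setminus Q) \cup \{a\}$ consists of the elements below some $i \vee a$ with $i \in L \setminus Q$, and it contains $1$ only if $i \vee a = 1$, i.e.\ $a^+ \leq i$, for some $i \notin Q$ — impossible since $a^+ \in Q$ is upward-closed forces any such $i$ into $Q$; thus this ideal is proper and extends to a prime ideal whose complement is a prime filter $P_1 \subseteq Q$ with $a \notin P_1$, whence $P_1 \subsetneq Q$. Then $P_1 \subsetneq Q \subsetneq P_3$ contradicts (b).

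The main obstacle is the sandwiching construction in (b) $\Rightarrow$ (a): one has to select the correct element to adjoin on each side and observe that the properness of the enlarged filter, respectively ideal, is controlled exactly by $b^* \notin Q$ (for extending upward) and $a^+ \in Q$ (for extending downward) — which is precisely the information provided by the failure of (D). Once this is set up, the rest is routine, given Varlet's equivalences (M) $\Leftrightarrow$ (D) $\Leftrightarrow$ regularity.
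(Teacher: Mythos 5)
Your argument is correct. Note, however, that the paper itself gives no proof of this proposition: it is quoted from Varlet's papers, so there is no in-text argument to compare against; what you have supplied is a complete, self-contained derivation. Your reduction to condition (D) is legitimate, since the paper records (for distributive double $p$-algebras) the chain of equivalences regular $\Leftrightarrow$ (M) $\Leftrightarrow$ (D) just before the proposition. The direction (a)$\Rightarrow$(b) is clean: $a \vee a^+ = 1$ and $b \wedge b^* = 0$ together with primeness do exactly what you claim. In (b)$\Rightarrow$(a), two points are used implicitly and deserve a word: first, the description of the ideal generated by $(L \setminus Q) \cup \{a\}$ as $\{x \mid x \leq i \vee a,\ i \notin Q\}$ is valid only because $L \setminus Q$ is itself a (prime) ideal, which holds precisely since $Q$ is a prime filter; second, you invoke the prime filter separation theorem ($u \nleq v$ yields a prime filter containing $u$ but not $v$) and prime extensions of proper filters/ideals, which go slightly beyond the maximal-filter remark in the paper but are standard for bounded distributive lattices (and the paper itself uses the separation property in the proof of its embedding proposition). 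With those two observations made explicit, the sandwiching of $Q$ between $P_1$ and $P_3$, controlled by $a^+ \in Q$ and $b^* \notin Q$, is exactly the right mechanism, and the proof stands.
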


A \emph{De Morgan algebra} is an algebra $(L,\vee,\wedge,\sim,0,1)$ such that $(L,\vee,\wedge, 0, 1)$ is a bounded distributive
lattice and $\sim$ is a polarity on $L$, that is, it satisfies 
\begin{equation}
{\sim}{\sim} x =x \tag{DM1}
\end{equation}
and
\begin{equation}
 x \leq y \Longrightarrow {\sim} x \geq {\sim} y. \tag{DM2}
\end{equation}
Note that equationally the operation $\sim$ can be defined by
\[
x = {\sim}{\sim}x \qquad \text{and} \qquad {\sim}x \vee {\sim}y = {\sim}(x \wedge y).
\]
A \emph{Kleene algebra} is a De~Morgan algebra $(L,\vee,\wedge, {\sim}, 0, 1)$ satisfying
\begin{equation}
x \wedge {\sim} x \leq y \vee {\sim} y  \tag{K}
\end{equation}
In \cite[Lemma~1.1]{CigGal81} it is proved that in a Kleene algebra $(L,\vee,\wedge, {\sim}, 0, 1)$,  
\begin{equation}\label{Eq:PseudoKleene}
x \wedge y = 0 \text{ implies } y \leq {\sim} x.
\end{equation}

\begin{sloppypar}
A \emph{pseudocomplemented De Morgan algebra} $(L,\vee,\wedge,{\sim}, {^*}, 0,1)$ is such that
$(L,\vee,\wedge,{\sim}, 0,1)$ is a De~Morgan algebra and $(L,\vee,\wedge, {^*}, 0,1)$ is a $p$-algebra. 
In fact, any pseudocomplemented De Morgan algebra forms a double $p$-algebra, where the pseudocomplement operations
determine each other by
\end{sloppypar}
\begin{equation}\label{Eq:StarPlus}
{\sim} x^* = ({\sim} x)^+ \text{ \ and \ } {\sim} x^+ = ({\sim} x)^*.
\end{equation}
By \eqref{Eq:PseudoKleene} we have that in a pseudocomplemented Kleene algebra
\begin{equation}\label{Eq:Normality}
x^* \leq {\sim} x \leq x^+.
\end{equation}

H.~P.~Sankappanavar \cite{Sankappanavar86} has proved that a pseudocomplemented De Morgan algebra satisfying (M)
truly is a congruence-regular pseudocomplemented De Morgan algebra. Therefore, in the sequel we may call pseudocomplemented 
De Morgan and Kleene algebras \emph{regular} when they satisfy (M) or (D). Note that in a pseudocomplemented De Morgan algebra,
condition (M) is actually in the form
\[
x^* = y^* \text{ and } ({\sim} x)^* = ({\sim }y)^* \text{ imply } x=y. 
\]

Pseudocomplemented ortholattices are studied in \cite{Hahmann2009}. They are algebras 
$(L,\vee,\wedge, {^\bot}, {^*}, 0,1)$ such that $(L,\vee,\wedge, {^*}, 0,1)$ is a $p$-algebra and
$(L,\vee,\wedge,^{\bot}, 0,1)$ is an ortholattice, that is, $(L,\vee,\wedge, 0,1)$ is a bounded lattice and
${^\bot} \colon L \to L$ is a polarity satisfying $x \wedge x^\bot = 0$. This implies also $x \vee x^\bot = 1$, that is, 
$L$ is complemented by  ${^\bot}$. Also a pseudocomplemented ortholattice $(L,\vee,\wedge, {^\bot}, {^*}, 0,1)$ 
defines a double $p$-algebra by $x^+ = x^{\bot * \bot}$. In addition, a property corresponding to \eqref{Eq:Normality} holds, that is,
\[ x^* \leq  x^\bot \leq x^+.\]
Pseudocomplemented ortholattices are not generally distributive. In fact, a distributive ortholattice is a Boolean algebra.

An element $j$ of a complete lattice $L$ is called \emph{completely join-irreducible} if $j = \bigvee S$
implies $j \in S$ for every subset $S$ of $L$. Note that the least element $0$ of $L$ is not completely
join-irreducible. The set of completely join-irreducible elements of $L$ is denoted by 
$\mathcal{J}(L)$, or simply by $\mathcal{J}$ if there is no danger of confusion. 
In a distributive lattice $L$, the \emph{principal filter} ${\uparrow} j = \{x \in L \mid x \geq j\}$ 
of each $j \in \mathcal{J}$ is prime. 

A complete lattice $L$ is \emph{spatial} if for each $x \in L$,
\[ x = \bigvee \{ j \in \mathcal{J} \mid j \leq x \}. \]
An element $x$ of a complete lattice $L$ is said to be \emph{compact} if, for every $S \subseteq L$, 
\label{Def:Compact}
\[ x \leq \bigvee S \Longrightarrow x \leq \bigvee F \text{ for some finite subset $F$ of $S$}. \]
Let us denote by $\mathcal{K}(L)$ the set of compact elements of $L$.
A complete lattice $L$ is said to be \emph{algebraic} if for each $a \in L$,
\[ a  = \bigvee \{ x \in \mathcal{K}(L) \mid x \leq a\}.\]  

It is known (see e.g. \cite{JarRad17}) that every De~Morgan algebra defined on an algebraic lattice is spatial.
In case of regular pseudocomplemented Kleene algebras defined on algebraic lattices we presented in \cite{JarRad17}
the following variant of Proposition~\ref{Prop:Varlet}.

\begin{proposition} \label{Prop:Regular}
Let $(L,\vee,\wedge,{\sim}, {^*}, 0, 1)$ be a pseudocomplemented De Morgan algebra defined on an algebraic lattice. 
The following are equivalent:
\begin{enumerate}[\rm (a)]
 \item $L$ is regular.
 \item Any chain in $\mathcal{J}$ has at most two elements.
\end{enumerate}
\end{proposition}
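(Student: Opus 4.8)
The plan is to reduce the statement to the prime-filter criterion of Proposition~\ref{Prop:Varlet} and then to transfer ``chains of prime filters'' to ``chains in $\mathcal{J}$'' by exploiting that an algebraic lattice is spatial. Since a pseudocomplemented De~Morgan algebra is a double $p$-algebra by \eqref{Eq:StarPlus} and its lattice is distributive by definition, Proposition~\ref{Prop:Varlet} gives that (a) holds if and only if every chain of prime filters of $L$ has at most two elements; it thus suffices to show this is equivalent to (b). Two facts are used throughout: algebraic lattices are spatial, so $x=\bigvee\{j\in\mathcal{J}\mid j\le x\}$ for every $x\in L$; and every $j\in\mathcal{J}$ is compact (since $j=\bigvee\{c\in\mathcal{K}(L)\mid c\le j\}$ forces $j\in\mathcal{K}(L)$) and, being join-irreducible in a distributive lattice, is join-prime, so that each ${\uparrow}j$ is a prime filter and every compact element is a \emph{finite} join of elements of $\mathcal{J}$.

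The implication (a)$\Rightarrow$(b) is immediate by contraposition: a chain $j_1<j_2<j_3$ in $\mathcal{J}$ yields the chain of prime filters ${\uparrow}j_3\subsetneq{\uparrow}j_2\subsetneq{\uparrow}j_1$, so the prime-filter condition fails and $L$ is not regular.

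For (b)$\Rightarrow$(a) I would take a chain $F_1\subsetneq F_2\subsetneq F_3$ of prime filters and build a three-element chain in $\mathcal{J}$. One approach is to embed $L$ (via $x\mapsto\{j\in\mathcal{J}\mid j\le x\}$, which is a bounded-lattice embedding because elements of $\mathcal{J}$ are join-prime) into the distributive lattice of down-sets of the poset $\mathcal{J}$, lift the chain $F_1\subsetneq F_2\subsetneq F_3$ to prime filters of that down-set lattice, and read a length-three chain in $\mathcal{J}$ off them. I expect the main obstacle to be this last transfer: prime filters need not be principal, nor even completely prime (a non-principal ultrafilter contains no nonzero compact element), so one cannot directly extract a completely join-irreducible ``witness'' from each $F_i$. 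The point to be established is that such non-principal prime filters cannot make a chain longer than the principal filters ${\uparrow}j$ already do; I would try to prove this using algebraicity — each element is a directed join of compacts, each compact is a finite join of elements of $\mathcal{J}$, and a prime filter splits finite joins — to replace the given chain by one built from principal filters at join-irreducibles, with a careful argument guaranteeing that the join-irreducibles obtained really do form a chain. An alternative that bypasses Proposition~\ref{Prop:Varlet}: assuming (b), prove condition (D) directly; by spatiality a failure of $x\wedge x^+\le y\vee y^*$ witnessed by some $j\in\mathcal{J}$ at once produces some $j'\in\mathcal{J}$ with $j'<j$, and the remaining difficulty — again the crux — is to locate an element of $\mathcal{J}$ strictly above $j$, for which one would use \eqref{Eq:StarPlus} together with the dual spatiality supplied by the De~Morgan operation.
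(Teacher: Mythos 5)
Your reduction and the easy half are fine: a pseudocomplemented De~Morgan algebra is a distributive double $p$-algebra, so Proposition~\ref{Prop:Varlet} applies, and a three-element chain $j_1<j_2<j_3$ in $\mathcal{J}$ gives the three-element chain of prime filters ${\uparrow}j_3\subsetneq{\uparrow}j_2\subsetneq{\uparrow}j_1$ (each ${\uparrow}j$ is prime because completely join-irreducible elements are join-prime in a distributive lattice), which settles (a)$\Rightarrow$(b). Note, incidentally, that the paper itself does not prove this proposition but recalls it from [JR18], so the only thing to assess is whether your argument stands on its own.

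It does not, because the direction (b)$\Rightarrow$(a) is never actually proved: you offer two strategies and in both of them you explicitly leave open exactly the step that carries the mathematical content. In the prime-filter route, the assertion that non-principal prime filters ``cannot make a chain longer than the principal ones'' is precisely what has to be demonstrated, and it is not a formality: condition (b) only controls the principal prime filters ${\uparrow}j$ with $j\in\mathcal{J}$, while chains of prime filters in an algebraic distributive lattice can a priori involve non-principal (not even completely prime) filters, so some use of the operations ${\sim}$ and $^*$ is unavoidable; no such argument is given. In the direct route via (D), the cheap part is indeed cheap (if $j\le x\wedge x^+$ and $j\nleq y\vee y^*$, then $j\nleq y^*$ gives $j\wedge y\neq 0$, and spatiality yields $k\in\mathcal{J}$ with $k<j$; also, any \emph{minimal} $j\in\mathcal{J}$ is an atom and hence never violates (D)), but the decisive step — ruling out a violation at a non-minimal $j$, e.g.\ by exploiting that $j\le x\wedge x^+$ forces $j^+=(x\wedge x^+)^+=x^+\vee x^{++}=1$, equivalently that ${\sim}j$ is a dense completely meet-irreducible element, and turning this into a third element of $\mathcal{J}$ or some other contradiction with (b) — is exactly what you label ``the crux'' and do not supply. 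Until one of these two gaps is closed with an actual argument using \eqref{Eq:StarPlus}, algebraicity and the dual spatiality coming from ${\sim}$, the proposal is an outline of the hard direction rather than a proof of it.
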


\section{Alexandroff topologies and Kleene--Varlet spaces} \label{Sec:VarletSpaces}

First recall some facts of Alexandroff topologies from the literature \cite{Alex37,Birk37}.
An \emph{Alexandroff topology} is a topology that
contains also all arbitrary intersections of its members. Let $\mathcal{T}$ be an Alexandroff topology on $X$.
Then, for each $A \subseteq X$, there exists the \emph{smallest neighbourhood} (i.e.\, the smallest
open set containing $A$):
\[
N(A) = \bigcap \{ Y \in \mathcal{T} \mid A \subseteq Y \}.
\]
In particular, the smallest neighbourhood of a point $x \in X$ is denoted by $N(x)$. The family 
\begin{equation*}
\mathcal{J}(\mathcal{T}) = \{ N(x) \mid x \in X\} 
\end{equation*}
is the \emph{smallest base} of the Alexandroff topology $\mathcal{T}$. 
Each member $B$ of $\mathcal{T}$ can be expressed
as $B = \bigcup \{ N(x) \mid x \in B \}$. 

For an Alexandroff topology $\mathcal{T}$ on $X$, the ordered set $(\mathcal{T},\subseteq)$ is a complete lattice
in which 
\[ \bigvee  \mathcal{H} = \bigcup  \mathcal{H} \text{ \quad and \quad }  \bigwedge  \mathcal{H} = \bigcap  \mathcal{H} \]
for any $\mathcal{H} \subseteq \mathcal{T}$. This lattice $\mathcal{T}$ is spatial and $\mathcal{J}(\mathcal{T})$
is the set of completely join-ir\-re\-ducible elements. The complete Boolean lattice $\wp(X)$ is known to be algebraic
and completely distributive. Because $\mathcal{T}$ is a complete sublattice of $\wp(X)$, it is algebraic and
completely distributive.

Let $(X, \lesssim)$ be a quasiordered set. We may define an Alexandroff topology 
${\mathcal U}(X)$ on $X$ consisting of all upward-closed subsets of $X$ with respect to the
relation $\lesssim$. Formally,
\[
\mathcal{U}(X) = \{ A \subseteq X \mid (\forall x,y \in X) \; x \in A \ \ \& \ \ x \lesssim y \Longrightarrow y \in A \} \\
\]
We write ${\uparrow}x = \{y \in X \mid x \lesssim y\}$ also for the \emph{quasiorder filter} of $x$.
The set ${\uparrow}x$ is the smallest neighbourhood of the point $x$ in the Alexandroff topology $\mathcal{U}(X)$.

We may now define the rough approximation operators in terms of the quasiorder $\lesssim$ on $X$, that is, for any $A \subseteq U$,
\[ A^\DOWN = \{ x \in X \mid {\uparrow}x \subseteq A \} \text{ \quad and \quad } 
  A^\UP = \{ x \in X \mid {\uparrow}x \cap A \ne \emptyset \} \]
for any $A \subseteq X$. It is easy to see that
\[ \mathcal{U}(X) = \{  A^\DOWN \mid A \subseteq X\}, \]
which means that $A \mapsto A^\DOWN$ is the interior operator of the topology $\mathcal{U}(X)$.
The lattice $\mathcal{U}(X)$ is pseudocomplemented, in which
\[ A^* = A^{c\DOWN} = A^{\UP c} = \{ x \in X \mid {\uparrow} x \cap A = \emptyset\}. \]
This means that for any quasiordered set $(X,\lesssim)$, the algebra
\[ (\mathcal{U}(X), \cup, \cap, {^*}, \emptyset, X) \]
is a distributive $p$-algebra.

\medskip%

We studied in \cite{JarRad14monteiro} so-called Monteiro spaces in the setting of rough sets defined by quasiorders. 
Monteiro spaces were introduced by D.~Vakarelov in \cite{Vaka77}, where they were used for giving a representation theorem for Nelson algebras. 
Let us define two kinds of ``spaces''. Note that Kleene spaces were also defined by P.~Pagliani and M.~Chakraborty in \cite{pagliani2018}.

\begin{definition} \label{Def:Kleene-Varlet}
Let $(X,\leq,g)$ be a structure such that $(X,\leq)$ is a partially ordered set and $g$ is 
a map on $X$. If the map $g$ satisfies conditions
\begin{enumerate}[({J}1)]
 \item if $x \leq y$, then $g(x) \geq g(y)$,
 \item $g(g(x)) = x$,
 \item $x \leq g(x)$ or $g(x) \leq x$,
\end{enumerate}
then $(X,\leq,g)$ is called a \emph{Kleene space}. A Kleene space is a \emph{Kleene--Varlet space} if
\begin{enumerate}[({J}4)]
\item any chain in $(X,\leq)$ has at most two elements.
\end{enumerate}
\end{definition}

The idea is that Kleene--Varlet spaces will be used in a representation theorem of those pseudocomplemented Kleene
algebras that are regular, meaning that any chain of their prime filters has at most two elements.
This is abstracted in (J4). Note that Monteiro spaces are also Kleene spaces, but (J4) is replaced by so-called 
\emph{interpolation property}, which states that if $x, y \leq g(x), g(y)$ for some $x, y \in X$, then there 
exists $z \in X$ such that $x, y \leq z \leq g(x), g(y)$. In Monteiro spaces, the length of chains is not restricted.

\begin{remark} \label{Rem:Varlet}
Let $(X,\leq,g)$ be a Kleene--Varlet space. Here we present some observations related to the map $g$.
\begin{enumerate}[\rm (a)]
\item Conditions (J1) and (J2) mean that $g\colon X\rightarrow X$ is a polarity on $X$, that is, $(X,\leq)$ is isomorphic to its dual
$(X,\geq)$.

\item Condition (J3) means that any $x \in X$ is comparable with $g(x)$. Therefore $X$ can be divided into two disjoint parts in terms of $g$: 
\[ \{x \in X \mid x \leq g(x)\} \quad \text{ and } \quad  \{x \in X \mid x > g(x) \}.\]

\item Condition (J4) says that $X$ has at most two levels: $\{x \in X \mid x \leq g(x)\}$ is the ``lower level''
and $ \{x \in X \mid x > g(x) \}$ is the ``upper level''. 

\item If $g(x) = x$, then  $x$ is not comparable with any $y\neq x$. Indeed, if $x < y$, then $g(y) <g(x) = x < y$ is a chain with more 
than two elements, which contradicts (J4). Similarly, $y < x$ implies that $y < x = g(x) < g(y)$ is a chain of three elements, a contradiction again.
\end{enumerate}
\end{remark}

For a Kleene--Varlet space $(X,\leq,g)$, we define a map ${\sim} \colon \mathcal{U}(X) \to \mathcal{U}(X)$ by:
\[ {\sim} A = \{ x \in X \mid g(x) \notin A \} .\]
The operation $\sim$ is well defined. Indeed, let $A \in \mathcal{U}(X)$. If $x \in {\sim}A$ and $x \leq y$, 
then $g(x) \notin A$ and $g(x) \geq g(y)$. Because $A$ is upward-closed, we have $g(y) \notin A$ and $y \in {\sim} A$. 
Thus, ${\sim}A \in \mathcal{U}(X)$.

We can now write the following proposition.

\begin{proposition} \label{Prop:VarletRegular}
Let $(X,\leq, g)$ be a Kleene--Varlet space. Then, the algebra
\[ ( \mathcal{U}(X), \cup, \cap, {\sim}, {^*}, \emptyset, X) \]
is a regular pseudocomplemented Kleene algebra defined on an algebraic lattice.
\end{proposition}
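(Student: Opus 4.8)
The plan is to check, one block at a time, that the listed operations make $\mathcal{U}(X)$ a pseudocomplemented Kleene algebra, reusing the facts already recorded in this section, and then to derive regularity from Proposition~\ref{Prop:Regular}.

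The lattice part is essentially for free: $\mathcal{U}(X)$ is a complete sublattice of $\wp(X)$, hence a bounded, distributive, algebraic (in fact completely distributive) lattice with $\bigvee=\bigcup$ and $\bigwedge=\bigcap$, and $(\mathcal{U}(X),\cup,\cap,{^*},\emptyset,X)$ has already been shown to be a distributive $p$-algebra with $A^*=A^{c\DOWN}$. So only the De~Morgan operation $\sim$ needs attention. First I would verify that $\sim$ is a polarity. Condition (DM2) is immediate, since ${\sim}A=\{x\in X\mid g(x)\notin A\}$ is visibly antitone in $A$; for (DM1) one unravels ${\sim}{\sim}A=\{x\mid g(x)\notin{\sim}A\}=\{x\mid g(g(x))\in A\}$, which equals $A$ by (J2). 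Hence $(\mathcal{U}(X),\cup,\cap,{\sim},\emptyset,X)$ is a De~Morgan algebra.

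Next I would prove the Kleene inequality (K), i.e.\ $A\cap{\sim}A\subseteq B\cup{\sim}B$ for all $A,B\in\mathcal{U}(X)$; this is the only step that needs a genuine argument. Let $x\in A\cap{\sim}A$, so $x\in A$ and $g(x)\notin A$. By (J3) either $x\le g(x)$ or $g(x)\le x$, and the first possibility is ruled out because $A$ is upward-closed and $x\in A$, which would force $g(x)\in A$; hence $g(x)\le x$. Now suppose $x\notin{\sim}B$, i.e.\ $g(x)\in B$; since $B$ is upward-closed and $g(x)\le x$, we get $x\in B$. So $x\in B\cup{\sim}B$ in either case, proving (K). Combined with the De~Morgan and $p$-algebra parts, this shows that $(\mathcal{U}(X),\cup,\cap,{\sim},{^*},\emptyset,X)$ is a pseudocomplemented Kleene algebra defined on an algebraic lattice.

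Finally, regularity would follow from Proposition~\ref{Prop:Regular}: since the algebra lives on an algebraic lattice, it is enough to show that every chain in $\mathcal{J}=\mathcal{J}(\mathcal{U}(X))$ has at most two elements. The completely join-irreducible elements of the Alexandroff lattice $\mathcal{U}(X)$ are exactly the smallest neighbourhoods ${\uparrow}x$ with $x\in X$, and because $(X,\le)$ is a poset the map $x\mapsto{\uparrow}x$ is an order-reversing bijection of $X$ onto $\mathcal{J}$; thus a three-element chain ${\uparrow}x_1\subsetneq{\uparrow}x_2\subsetneq{\uparrow}x_3$ in $\mathcal{J}$ would produce a three-element chain $x_3<x_2<x_1$ in $X$, contradicting (J4). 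Hence chains in $\mathcal{J}$ have at most two elements, and Proposition~\ref{Prop:Regular} yields regularity. The hard part, such as it is, is the Kleene inequality (K) and this last translation between chains in $\mathcal{J}$ and chains in $X$; the remaining verifications are direct unravellings of the definitions.
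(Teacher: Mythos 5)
Your proposal is correct and follows essentially the same route as the paper's proof: verify (DM1), (DM2), (K) for the operation ${\sim}A=\{x\mid g(x)\notin A\}$ using (J2) and (J3), and then obtain regularity from Proposition~\ref{Prop:Regular} via the correspondence $x\mapsto{\uparrow}x$ between $X$ and the completely join-irreducible elements of $\mathcal{U}(X)$, which transfers the two-element chain condition (J4). The only (cosmetic) difference is that you prove (K) directly by a case split on (J3), whereas the paper argues by contradiction; the content is the same.
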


\begin{proof} We already know that $(\mathcal{U}(X), \cup, \cap, {\sim}, {^*}, \emptyset, X)$
is a pseudocomplemented algebraic lattice. Next we verify that ${\sim}$ is a Kleene operation. 
This proof is modified from the one appearing in \cite{Vaka77}. Let $A,B \in  \mathcal{U}(X)$.

\medskip\noindent%
(DM1) $x \in A \iff g(g(x)) \in A \iff g(x) \notin {\sim}A \iff x \in {\sim} {\sim}A$. Thus $A = {\sim}{\sim}A$.

\medskip\noindent%
(DM2) Assume $A \subseteq B$. If $x \in {\sim}B$, then $g(x) \notin B$. This gives $g(x) \notin A$ and $x \in {\sim} A$.
 So, ${\sim} B \subseteq {\sim} A$. 
 
\medskip\noindent%
(K) Suppose that $A \cap {\sim} A \nsubseteq B \cup {\sim} B$. Then there exists $x \in X$ such that $x \in A \cap {\sim} A$ and 
$x \notin B \cup {\sim} B$. Therefore,
\[ x \in A, \quad g(x) \notin A, \quad x \notin B,  \quad g(x) \in B .\]
But from these we have
\[ x \nleq g(x) \qquad \text{ and } \qquad g(x) \nleq x,\]
a contradiction. We have now proved that $ ( \mathcal{U}(X), \cup, \cap, {\sim}, {^*}, \emptyset, X)$ is a pseudocomplemented Kleene algebra.

\medskip\noindent%
(M) As we have noted, the family $\mathcal{J}(\mathcal{U}(X)) = \{ {\uparrow} x \mid x \in X\}$ is the set of completely join-irreducible elements
of the complete lattice $\mathcal{U}(X)$. It is easy to observe that for all $x,y \in X$, $x \leq y$ if and only if ${\uparrow} y \subseteq {\uparrow} x$.
This implies that any chain in $\mathcal{J}(\mathcal{U}(X))$ has at most two elements, because $X$ satisfies this property. Therefore, by 
Proposition~\ref{Prop:Regular},
$(\mathcal{U}(X), \cup, \cap, {\sim}, {^*}, \emptyset, X)$ is a regular pseudocomplemented Kleene algebra.
\end{proof}

Let $(L, \vee, \wedge,  {\sim}, {^*}, 0, 1)$ be a regular pseudocomplemented Kleene algebra with $\mathcal{F}_p$ as the set of its prime filters.
The algebra $(L, \vee, \wedge, {^*}, {^+}, 0, 1)$ is a distributive double $p$-algebra, where $^+$ is defined as in \eqref{Eq:StarPlus}.
By Proposition~\ref{Prop:Varlet}, any chain in $(\mathcal{F}_p, \subseteq)$ has at most two elements.
We define for any $P \in \mathcal{F}_p$ the set 
\[ g(P) = \{ x \in L \mid {\sim} x \notin P \}. \]

\begin{lemma} \label{Lem:RegKleenePrime}
Let $(L, \vee, \wedge,  {\sim}, {^*}, 0, 1)$ be a regular pseudocomplemented Kleene algebra.
For any $P \in \mathcal{F}_p$, $g(P)$ is a prime filter.
\end{lemma}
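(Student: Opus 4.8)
The plan is to check directly that $g(P)$ satisfies the four requirements of a prime filter: that it is nonempty, upward-closed, closed under binary meets, and proper with the prime property. The only tools needed are the De~Morgan laws ${\sim}(x\wedge y) = {\sim}x \vee {\sim}y$ and ${\sim}(x\vee y) = {\sim}x \wedge {\sim}y$, the involutivity ${\sim}{\sim}x = x$ and antitonicity (DM2) of ${\sim}$, together with the defining closure properties of the prime filter $P$. Note that the regularity hypothesis is not actually used in this lemma; it will only enter later through Proposition~\ref{Prop:Varlet}.

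First I would record that $g(P)$ is proper and nonempty: since ${\sim}1 = 0 \notin P$ (as $P$ is proper), we have $1 \in g(P)$; and since ${\sim}0 = 1 \in P$, we have $0 \notin g(P)$, so $g(P) \neq L$. For upward-closure, suppose $x \in g(P)$ and $x \leq y$; then ${\sim}y \leq {\sim}x$ by (DM2), and if ${\sim}y \in P$ then upward-closure of $P$ would force ${\sim}x \in P$, contradicting $x \in g(P)$; hence ${\sim}y \notin P$, i.e.\ $y \in g(P)$. For closure under meets, take $x,y \in g(P)$; then ${\sim}(x\wedge y) = {\sim}x \vee {\sim}y$, and were this in $P$, primeness of $P$ would give ${\sim}x \in P$ or ${\sim}y \in P$, again a contradiction, so $x \wedge y \in g(P)$. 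Finally, for the prime property, suppose $a \vee b \in g(P)$, i.e.\ ${\sim}a \wedge {\sim}b = {\sim}(a\vee b) \notin P$; if both $a \notin g(P)$ and $b \notin g(P)$, then ${\sim}a, {\sim}b \in P$, whence ${\sim}a \wedge {\sim}b \in P$ since $P$ is a filter, a contradiction. Thus $a \in g(P)$ or $b \in g(P)$, and $g(P) \in \mathcal{F}_p$.

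I expect no genuine obstacle here: every step is a one-line translation through a De~Morgan identity followed by an application of a closure property of $P$. The same conclusion can also be reached more structurally by observing that $g(P) = \{ {\sim}y \mid y \notin P \} = {\sim}(L \setminus P)$, that $L \setminus P$ is a prime ideal of the distributive lattice $L$, and that the order-anti-automorphism ${\sim}$ carries prime ideals onto prime filters. I would keep the elementary verification above for self-containedness and mention this alternative viewpoint only in passing.
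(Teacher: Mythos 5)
Your proof is correct and follows essentially the same route as the paper's: the same four verifications (nonempty/proper, upward-closed, meet-closed, prime) using the De~Morgan identities together with the filter and primeness properties of $P$. Your side remarks --- that regularity is not used in this lemma and that one could alternatively view $g(P)$ as the image under $\sim$ of the prime ideal $L\setminus P$ --- are accurate but do not change the argument.
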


\begin{proof}
Because $P$ is a proper filter, ${\sim}1 = 0 \notin P$. This means that $1 \in g(P)$ and therefore $g(P)$ is nonempty.

Assume that $x \in g(P)$ and $x \leq y$. Now ${\sim}x \notin P$ and ${\sim} x \geq {\sim}y$ imply ${\sim}y \notin P$, because $P$ is a filter.
Then $y \in g(P)$ and $g(P)$ is upward-closed.

Suppose $a,b \in g(P)$. Then, ${\sim}a \notin P$ and ${\sim}b \notin P$. Assume for contradiction that $a \wedge b \notin g(P)$.
Then ${\sim}(a \wedge b) = {\sim} a \vee {\sim} b$ belongs to $P$. But because $P$ is a prime filter, we have that ${\sim} a \in P$ or 
${\sim} b \in P$, a contradiction. Thus $a \wedge b \in g(P)$.

The filter $g(P)$ is proper, because $0 \in g(P)$ would imply ${\sim}0 = 1 \notin P$, which is impossible.

Finally, suppose $g(P)$ is not prime. Then, there are $a$ and $b$ in $L$ such that $a \vee b \in g(P)$, but $a \notin g(P)$ and $b \notin g(P)$.
Therefore, ${\sim} a \in P$ and ${\sim} b \in P$. Because $P$ is a filter then ${\sim}a \wedge {\sim} b = {\sim} (a \vee b)$ is in $P$. But 
this gives that $a \vee b \notin g(P)$, a contradiction. Hence $g(P)$ is prime. 
\end{proof}

Our next lemma shows how the prime filters of a regular pseudocomplemented Kleene algebra
form a Kleene--Varlet space.

\begin{lemma}\label{Lem:PseudoToVarlet}
If $(L, \vee, \wedge,  {\sim}, {^*}, 0, 1)$ is a regular pseudocomplemented Kleene algebra, then 
the triple $(\mathcal{F}_p, \subseteq, g)$ forms a Kleene--Varlet space.
\end{lemma}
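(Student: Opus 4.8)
The plan is to verify the four defining conditions (J1)--(J4) of a Kleene--Varlet space for the triple $(\mathcal{F}_p, \subseteq, g)$. That $(\mathcal{F}_p, \subseteq)$ is a partially ordered set is clear, and by Lemma~\ref{Lem:RegKleenePrime} the assignment $P \mapsto g(P)$ really is a map from $\mathcal{F}_p$ to itself, so it remains to check (J1)--(J4). Condition (J4) comes essentially for free: the reduct $(L, \vee, \wedge, {^*}, {^+}, 0, 1)$ is a distributive double $p$-algebra (with $^+$ given by \eqref{Eq:StarPlus}) and $L$ is regular, so Proposition~\ref{Prop:Varlet} applies and every chain in $(\mathcal{F}_p,\subseteq)$ has at most two elements.

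Conditions (J1) and (J2) are short computations with the definition of $g$. For (J1), if $P \subseteq Q$ and $x \in g(Q)$, then ${\sim} x \notin Q$, hence \emph{a fortiori} ${\sim} x \notin P$, so $x \in g(P)$; thus $P \subseteq Q$ yields $g(Q) \subseteq g(P)$, which is the required order-reversal. For (J2), unfolding the definition twice gives $x \in g(g(P)) \iff {\sim} x \notin g(P) \iff {\sim}{\sim} x \in P$, and since ${\sim}{\sim} x = x$ by (DM1) this is exactly $x \in P$; hence $g(g(P)) = P$.

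The one condition that needs an idea is (J3), namely that for every $P \in \mathcal{F}_p$ either $P \subseteq g(P)$ or $g(P) \subseteq P$, and this is where the Kleene inequality (K) (rather than merely the De Morgan laws) is essential. I would argue by contradiction: suppose $P \not\subseteq g(P)$ and $g(P) \not\subseteq P$. From $P \not\subseteq g(P)$ pick $a \in P$ with $a \notin g(P)$, i.e.\ ${\sim} a \in P$; since $P$ is a filter, $a \wedge {\sim} a \in P$. From $g(P) \not\subseteq P$ pick $b \in g(P)$ with $b \notin P$; then ${\sim} b \notin P$ as well. By (K) we have $a \wedge {\sim} a \leq b \vee {\sim} b$, so $b \vee {\sim} b \in P$ because $P$ is upward-closed, and primeness of $P$ then forces $b \in P$ or ${\sim} b \in P$, contradicting the choice of $b$. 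Hence (J3) holds, and $(\mathcal{F}_p, \subseteq, g)$ is a Kleene--Varlet space. The main obstacle is precisely this step; everything else is bookkeeping. One may also note that this is the prime-filter analogue of the verification of (K) in the proof of Proposition~\ref{Prop:VarletRegular}, with the roles of points and prime filters interchanged.
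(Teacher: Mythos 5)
Your proposal is correct and follows essentially the same route as the paper: (J1) and (J2) by direct computation with the definition of $g$, (J3) by the same contradiction argument using the Kleene inequality and primeness of $P$, and (J4) via Proposition~\ref{Prop:Varlet}, with Lemma~\ref{Lem:RegKleenePrime} guaranteeing $g$ maps $\mathcal{F}_p$ into itself. No gaps.
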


\begin{proof} We will show that $g$ satisfies the conditions (J1)--(J4). Let $P$, $P_1$, and $P_2$ be prime filters of $L$.

\medskip\noindent%
(J1) Assume that $P_1 \subseteq P_2$. If $x \in g(P_2)$, then ${\sim}x \notin P_2$. This gives ${\sim}x \notin P_1$ and $x \in g(P_1)$.
Hence, $g(P_2) \subseteq g(P_1)$.

\medskip\noindent%
(J2) For any $x \in L$, $x \in P \iff {\sim}{\sim}x \in P \iff {\sim} x \notin g(P) \iff x \in g(g(P))$.

\medskip\noindent%
(J3) Suppose that $P \nsubseteq g(P)$ and $g(P) \nsubseteq P$. There there are elements $x,y \in L$ such that
\[ x \in P, \qquad x \notin g(P), \qquad y \in g(P), \qquad y \notin P.\]
These imply $x \in P$ and ${\sim} x \in P$. Thus, $x \wedge {\sim} x \in P$. But now 
 $x \wedge {\sim} x \leq y \vee {\sim} y$ give that $y \vee {\sim} y \in P$. Because $P$ is a prime filter,
have $y \in P$ or ${\sim} y \in P$. Because the latter is equivalent to $y \notin g(P)$, we have a contradiction.

\medskip\noindent%
(J4) This condition is clear by Proposition~\ref{Prop:Varlet}. 
\end{proof}

\begin{sloppypar}
By combining Proposition~\ref{Prop:VarletRegular} and Lemma~\ref{Lem:PseudoToVarlet}, we have that any regular pseudocomplemented Kleene algebra 
$(L, \vee, \wedge,  {\sim}, {^*}, 0, 1)$ determines a regular pseudocomplemented Kleene algebra
\end{sloppypar}
\[ ( \mathcal{U}(\mathcal{F}_p), \cup, \cap, {\sim}, {^*}, \emptyset, \mathcal{F}_p).\]
defined on an algebraic lattice. Recall that for all $A \in \mathcal{U}(\mathcal{F}_p)$:
\[ {\sim} A = \{ P \in \mathcal{F}_p \mid g(P) \notin A\} \text{ \ and \ }
 A^* = \{ P \in \mathcal{F}_p \mid {\uparrow}P \cap A = \emptyset\},          
\]
where ${\uparrow}P = \{Q \in \mathcal{F}_p \mid P \subseteq Q\}$.

\medskip\noindent%
For any element $x \in L$, we denote 
\[
h(x) = \{ P \in \mathcal{F}_p \mid x \in P\}.
\]
It is easy to see that $h(x) \in \mathcal{U}(\mathcal{F}_p)$. Namely, if $P \in h(x)$ and $P \subseteq Q$ for some $P,Q \in \mathcal{F}_p$, 
then $x \in P \subseteq Q$, that is, $Q \in h(x)$. Therefore, the mapping $h \colon L \to  \mathcal{U}(\mathcal{F}_p)$ is well defined.

\begin{proposition} \label{Prop:Embedding} 
The mapping $h$ is an embedding between pseudocomplemented Kleene algebras.
\end{proposition}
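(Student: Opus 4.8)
The plan is to show that $h$ is an injective homomorphism preserving $\cup$, $\cap$, $\sim$, $^*$, $\emptyset$, and $X = \mathcal{F}_p$. The homomorphism properties for the bounded-lattice reduct are the standard Stone-style facts about prime filters in a distributive lattice: $h(0) = \emptyset$ since no proper filter contains $0$; $h(1) = \mathcal{F}_p$ since every filter contains $1$; $h(x \wedge y) = h(x) \cap h(y)$ because a filter is closed under meets and upward-closed; and $h(x \vee y) = h(x) \cup h(y)$, where $\supseteq$ is immediate from upward-closure and $\subseteq$ is precisely the primeness of the filters in $h(x \vee y)$. Injectivity is the prime filter separation theorem: if $x \nleq y$ in the distributive lattice $L$, then $\mathord{\uparrow} x$ is a filter missing $y$, and the ideal $\mathord{\downarrow} y$ can be extended (Zorn's Lemma, as already invoked in the paper for maximal filters) to a prime filter containing $y$ but not $x$, so $h(x) \neq h(y)$; together with $h$ being order-preserving this gives that $h$ is an order-embedding, hence injective.

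The two points requiring actual work are the preservation of $\sim$ and of $^*$. For $\sim$: I would compute, for $P \in \mathcal{F}_p$,
\[
P \in h({\sim} x) \iff {\sim} x \in P \iff x \notin g(P) \iff g(P) \notin h(x) \iff P \in {\sim} h(x),
\]
where the middle equivalence uses the definition $g(P) = \{ y \in L \mid {\sim} y \notin P \}$ together with $\sim$ being an involution (so ${\sim} x \in P \iff {\sim}({\sim}({\sim}x)) \notin P$ is not quite it — rather $x \notin g(P)$ unpacks directly to $\sim x \in P$). Thus $h({\sim} x) = {\sim} h(x)$. Note this step relies on Lemma~\ref{Lem:RegKleenePrime} so that $g(P)$ is genuinely an element of $\mathcal{F}_p$ and the expression $g(P) \notin h(x)$ is meaningful.

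The preservation of $^*$ is the main obstacle. We must show $h(x^*) = h(x)^* = \{ P \in \mathcal{F}_p \mid \mathord{\uparrow} P \cap h(x) = \emptyset \}$. For the inclusion $h(x^*) \subseteq h(x)^*$: if $x^* \in P$ and $P \subseteq Q$ with $x \in Q$, then $x \wedge x^* = 0 \in Q$, contradicting properness of $Q$; hence $\mathord{\uparrow} P \cap h(x) = \emptyset$. The reverse inclusion is the delicate direction: assuming $\mathord{\uparrow} P \cap h(x) = \emptyset$, i.e.\ no prime filter extending $P$ contains $x$, we must deduce $x^* \in P$. Suppose not; then $x^* \notin P$, and one builds a prime filter $Q \supseteq P$ with $x^* \notin Q$; the goal is to arrange $x \in Q$, contradicting $\mathord{\uparrow} P \cap h(x) = \emptyset$. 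Concretely I would consider the filter generated by $P \cup \{x\}$ and argue it is proper: if it were not, some $p \in P$ satisfies $p \wedge x = 0$, whence $p \leq x^*$ and so $x^* \in P$, a contradiction; then extend this proper filter to a prime one $Q$, which contains $x$ and $P$. This uses that $L$ is distributive (so proper filters extend to prime filters) and the pseudocomplement law $p \wedge x = 0 \iff p \leq x^*$. Assembling the four operation-preservation facts with injectivity yields that $h$ is an embedding of pseudocomplemented Kleene algebras.
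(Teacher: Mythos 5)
Your proof is correct and takes essentially the same route as the paper's: the standard prime-filter facts give injectivity and the preservation of $0$, $1$, $\vee$, $\wedge$; the computation through $g(P)$ gives $h({\sim}x) = {\sim}h(x)$; and for $h(x^*) = h(x)^*$ you argue exactly as the paper does, the nontrivial inclusion by generating a filter from $P \cup \{x\}$, proving it proper via $p \wedge x = 0 \Rightarrow p \leq x^*$, and extending it to a prime filter witnessing ${\uparrow}P \cap h(x) \neq \emptyset$. The only blemish is the wording of your separation step: one extends the filter ${\uparrow}x$ to a prime filter disjoint from the ideal generated by $y$ (hence containing $x$ but not $y$), rather than extending that ideal to a ``prime filter containing $y$ but not $x$''; the paper itself simply invokes the fact that in a distributive lattice any two distinct elements are separated by some prime filter.
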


\begin{proof}
We first note that $h$ is an injection. Because $L$ is distributive, for any $x \neq y$ there exists a prime filter $P$ such that $x \in P$ and
$y \notin P$, or $x \notin P$ and $y \in P$. This means that $h(x) \neq h(y)$.

\medskip\noindent%
\underline{$h(0) = \emptyset$}: Prime filters must be proper filters. Therefore, $0$
does not belong to any prime filter.

\medskip\noindent%
\underline{$h(1) = \mathcal{F}_p$}: The greatest element $1$ must belong to all prime filters.

\medskip\noindent%
\underline{$h(x \vee y) = h(x) \cup h(y)$}: For any $P \in \mathcal{F}_p$,
$P \in h(x \vee y)  \iff x \vee y \in P \iff x \in P \text{ or } y \in P 
\iff P \in h(x) \text{ or } P \in h(y) \iff P \in h(x) \cup h(y)$.

\medskip\noindent%
\underline{$h(x \wedge y) = h(x) \cap h(y)$}: Let $P \in \mathcal{F}_p$. Then 
$P \in h(x \wedge y) \iff x \wedge y \in P \iff x \in P \text{ and } y \in P \iff
P \in h(x) \text{ and }  P \in h(y) \iff P \in h(x) \cap h(y)$.

\medskip\noindent%
\underline{$h({\sim}x) =  {\sim}h(x)$}: If $P \in \mathcal{F}_p$, then 
$P \in h({\sim}x) \iff {\sim} x \in P \iff x \notin g(P) \iff g(P) \notin h(x) \iff
P \in {\sim}h(x)$. 

\medskip\noindent%
\underline{$h(x^*) = h(x)^*$}: The structure of the proof is taken from the proof of Lemma 9.10.4 in \cite{orlowska2015dualities}: 

\medskip\noindent%
($\subseteq$) Suppose $P \in h(x^*)$, that is, $x^* \in P$. Let $Q \in {\uparrow} P$. Then $x^* \in Q$, which
gives $x \notin Q$, because otherwise $0 = x \wedge x^*$ in $Q$. This is not possible, since $Q$ is a prime filter and thus proper.
Then $Q \notin h(x)$ gives ${\uparrow}P \cap h(x) = \emptyset$ and $P \in h(x)^*$.

\medskip\noindent%
($\supseteq$) Assume $P \notin h(x^*)$, that is, $x^* \notin P$. Let $Q$ be a filter generated by $P \cup \{x\}$.
First we show that $Q$ is proper. Indeed, if $Q$ is not proper, then $0 \in Q$. Because $0 \notin P$ and $x \neq 0$,
we have that $0 = x \wedge y$ for some $y \in P$, because $Q$ is the filter generated by  $P \cup \{x\}$.
This implies $y \leq x^*$. Now $x^* \notin P$ gives $y \notin P$, a contradiction. Because $Q$ is a proper filter,
there exists a prime filter $W$ such that $P \subseteq Q \subseteq W$. 
Now $x \in P \cup \{x\} \subseteq Q \subseteq W$ gives $W \in h(x)$. 
Thus, ${\uparrow}P \cap h(x) \neq \emptyset$ and $P \notin h(x)^*$, as required.
\end{proof}

In \cite[Theorem 5.3]{JarRad17} we proved that any regular pseudocomplemented Kleene algebra defined on an algebraic lattice is isomorphic 
to a rough set Kleene algebra determined by a tolerance induced by an irredundant covering. 
If $(A, \vee, \wedge,  {\sim}, {^*}, 0, 1)$ is a regular pseudocomplemented Kleene algebra, then by Proposition~\ref{Prop:Embedding}
it is isomorphic to a subalgebra of $\mathbb{U} = (\mathcal{U}(X), \cup, \cap, {\sim}, {^*}, \emptyset, X)$.
Because $\mathbb{U}$ is a regular pseudocomplemented Kleene algebra defined on an algebraic lattice, there exists a tolerance induced 
by an irredundant covering such that its rough set regular pseudocomplemented Kleene algebra $\mathbb{RS}$
is isomorphic to $\mathbb{U}$. Therefore, we can write the following representation theorem.

\begin{theorem} \label{Thm:Main} 
Let $\mathbb{L}$ be a regular pseudocomplemented Kleene algebra. Then, there exists a set $U$ and
a tolerance $R$ induced by an irredundant covering of $U$ such that $\mathbb{L}$ is isomorphic to a subalgebra of $\mathbb{RS}$. 
\end{theorem}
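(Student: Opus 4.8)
The plan is to chain together the structural results that have already been established in the excerpt, so that essentially no new work is required beyond bookkeeping. Let $\mathbb{L} = (L, \vee, \wedge, {\sim}, {^*}, 0, 1)$ be an arbitrary regular pseudocomplemented Kleene algebra, and let $\mathcal{F}_p$ be its set of prime filters with the map $g(P) = \{ x \in L \mid {\sim}x \notin P\}$.

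First I would invoke Lemma~\ref{Lem:PseudoToVarlet}: the triple $(\mathcal{F}_p, \subseteq, g)$ is a Kleene--Varlet space. Applying Proposition~\ref{Prop:VarletRegular} to this space, the algebra $\mathbb{U} = (\mathcal{U}(\mathcal{F}_p), \cup, \cap, {\sim}, {^*}, \emptyset, \mathcal{F}_p)$ is a regular pseudocomplemented Kleene algebra \emph{defined on an algebraic lattice}. Next, Proposition~\ref{Prop:Embedding} tells us that $h \colon L \to \mathcal{U}(\mathcal{F}_p)$, $h(x) = \{ P \in \mathcal{F}_p \mid x \in P\}$, is an embedding of pseudocomplemented Kleene algebras; since it preserves all the operations and is injective, $\mathbb{L}$ is isomorphic to the subalgebra $h(L)$ of $\mathbb{U}$.

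Now I would apply the earlier representation theorem \cite[Theorem~5.3]{JarRad17}, which is quoted in the excerpt: every regular pseudocomplemented Kleene algebra defined on an algebraic lattice is isomorphic to the rough set algebra $\mathbb{RS}$ of some tolerance $R$ induced by an irredundant covering of a suitable universe $U$. Since $\mathbb{U}$ is such an algebra (by the previous paragraph), there is a universe $U$, an irredundant covering of $U$, and its induced tolerance $R$, together with an isomorphism $\varphi \colon \mathbb{U} \to \mathbb{RS}$. Composing, $\varphi \circ h \colon \mathbb{L} \to \mathbb{RS}$ is an embedding, so its image is a subalgebra of $\mathbb{RS}$ isomorphic to $\mathbb{L}$. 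This $U$ and $R$ are exactly what the theorem asks for.

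There is no serious obstacle here, since all the genuine content has been absorbed into the lemmas and propositions cited above; the only thing to be careful about is that $h$ really lands in $\mathbb{U}$ and respects $^*$ as computed in $\mathcal{U}(\mathcal{F}_p)$ (that $A^* = \{P \mid {\uparrow}P \cap A = \emptyset\}$), which is precisely the content of the last case of Proposition~\ref{Prop:Embedding}. The remaining subtlety worth a sentence is that $h(L)$ need not be closed under the \emph{infinitary} operations of $\mathbb{U}$, but this is irrelevant: ``subalgebra'' here refers to the finitary signature $(\vee, \wedge, {\sim}, {^*}, 0, 1)$, and $h$ preserves all of these by Proposition~\ref{Prop:Embedding}. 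Transporting $h(L)$ across $\varphi$ yields the desired subalgebra of $\mathbb{RS}$, completing the proof.
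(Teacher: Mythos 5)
Your proposal is correct and follows essentially the same route as the paper: combine Lemma~\ref{Lem:PseudoToVarlet} and Proposition~\ref{Prop:VarletRegular} to get the algebraic-lattice algebra $\mathcal{U}(\mathcal{F}_p)$, embed $\mathbb{L}$ into it via $h$ from Proposition~\ref{Prop:Embedding}, and then apply the earlier representation theorem for algebras on algebraic lattices before composing the maps. Your added remark about the finitary signature is a harmless clarification and does not change the argument.
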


\section{Regular pseudocomplemented Kleene algebras satisfying the Stone identity} \label{Sec:StoneIdentity}

A \emph{Stone algebra} is a distributive $p$-algebra $(L, \vee, \wedge, {^*}, 0, 1)$ satisfying the \emph{Stone identity}:
\begin{equation} \label{Eq:Stone}
x^* \vee x^{**} = 1.
\end{equation}
In a Stone algebra the identities 
\[
(x \wedge y)^* = x^* \vee y^* \qquad \text{and} \qquad (x \vee y)^{**} = x^{**} \vee y^{**}
\]
also hold. A \emph{double Stone algebra} is a distributive double $p$-algebra $(L, \vee, \wedge, {^*}, {^+}, 0, 1)$ satisfying 
\eqref{Eq:Stone} and 
\begin{equation} \label{Eq:dualStone}
x^+ \wedge x^{++} = 0.
\end{equation}
A double Stone algebra satisfies the identity $x^{*+} = x^{**}$, because
\[ x^{**} = x^{**} \wedge 1 = x^{**} \wedge (x^{*+} \vee x^*) = x^{**} \wedge x^{*+}, \]
and hence $x^{**} \leq x^{*+}$. The inequality $x^{*+} \leq x^{**}$ follows from $x^* \vee x^{**} = 1$. 
Similarly, we can show $x^{+*} = x^{++}$. Because $x^{++} \leq x^{**}$, we have 
\[ x^* = x^{***} \leq x^{++*} = x^{+++} = x^+.\]
A double Stone algebra is called \emph{regular} if it is regular as a double $p$-algebra, that is, it satisfies
(M) or (D). 

Varlet has proved in \cite{Varlet1968} that three-valued {\L}ukasiewicz algebras coincide with regular double Stone algebras.
Here we use similar technique to prove that regular double Stone algebras coincide with regular pseudocomplemented Kleene
algebras satisfying \eqref{Eq:Stone}. The proof of the following proposition
is modified from the proof of \cite[Theorem 4.4]{Boicescu91}.

\begin{proposition} \label{Prop:Correspondence}
Let  $(L, \vee, \wedge, {^*}, {^+}, 0, 1)$ be a regular double Stone algebra. If we define an operation $\sim$ by
\[ {\sim} x = (x \wedge x^+) \vee x^*,\]
then the algebra $(L, \vee, \wedge, {\sim}, {^*}, 0, 1)$ is a regular pseudocomplemented Kleene algebra satisfying \eqref{Eq:Stone}.
\end{proposition}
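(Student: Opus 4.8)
The plan is to verify directly that the algebra $(L, \vee, \wedge, {\sim}, {^*}, 0, 1)$ obtained from a regular double Stone algebra by setting ${\sim} x = (x \wedge x^+) \vee x^*$ satisfies all the required axioms. Since $(L,\vee,\wedge,{^*},0,1)$ is already a distributive $p$-algebra satisfying \eqref{Eq:Stone}, the only genuine work is to show that $\sim$ is a De~Morgan polarity satisfying the Kleene condition (K), and that (M) holds for the pair $({^*}, {\sim})$.

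First I would compute a more usable normal form for ${\sim} x$. Using $x^{*+} = x^{**}$ and $x^{+*} = x^{++}$ together with $x^{++} \le x \le x^{**}$, one checks that $x \wedge x^+$ and $x^*$ are ``orthogonal'' in the sense that $(x \wedge x^+) \wedge x^* = 0$ and that ${\sim} x = (x \vee x^*) \wedge (x^+ \vee x^*) = (x \vee x^*) \wedge x^{*+} \wedge \cdots$; the point is to get a formula symmetric enough to make ${\sim}{\sim} x = x$ fall out. Concretely, I expect ${\sim} x = (x \wedge x^+)\vee x^*$ to satisfy ${\sim}{\sim}x$ equals $\big(((x\wedge x^+)\vee x^*)\wedge ((x\wedge x^+)\vee x^*)^+\big) \vee ((x\wedge x^+)\vee x^*)^*$, and after expanding $(\,\cdot\,)^*$ and $(\,\cdot\,)^+$ via the $p$-algebra identities $(a\vee b)^* = a^*\wedge b^*$, $(a\wedge b)^+ = a^+ \vee b^+$ and the double-Stone identities, this collapses to $x$. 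This is the step I expect to be the main obstacle: the De~Morgan law (DM1) ${\sim}{\sim}x = x$ requires a careful identity chase through the double Stone algebra, and it is where the regularity condition (D) $x\wedge x^+ \le y\vee y^*$ is needed — in fact (D) is exactly what guarantees that the ``middle part'' $x \wedge x^+$ and the ``bottom part'' $x^*$ recombine correctly and that no information is lost. Monotonicity (DM2) is comparatively easy: $x \le y$ gives $x^* \ge y^*$ and $x^+ \ge y^+$, but $x \wedge x^+$ is not obviously antitone, so here too one must use (D), writing $x \wedge x^+ \le y \vee y^*$ and combining with $x^* \ge y^*$ to deduce ${\sim} y \le {\sim} x$.

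Once $\sim$ is known to be a De~Morgan polarity, the Kleene condition (K) $x \wedge {\sim} x \le y \vee {\sim} y$ should follow from (D): compute $x \wedge {\sim} x = x \wedge ((x\wedge x^+)\vee x^*) = (x \wedge x^+) \vee (x \wedge x^*) = x \wedge x^+$ (since $x \wedge x^* = 0$), and dually $y \vee {\sim} y \ge y \vee y^*$, so (K) reduces precisely to (D). Finally, for (M) in the form ``$x^* = y^*$ and $({\sim}x)^* = ({\sim} y)^*$ imply $x = y$'': I would show $({\sim}x)^* = x^{+*}\wedge x^{**} = x^{++}$ (using $(x\wedge x^+)^* \wedge x^{**}$ and the Stone identities), so that the hypotheses become $x^* = y^*$ and $x^{++} = y^{++}$; since in a double Stone algebra $x^{++} = x^+{}^*{}^* $ determines $x^+$ up to — more carefully, $x^{++}=y^{++}$ gives $x^{+} = x^{+++} = x^{++}{}^+ = y^{++}{}^+ = y^+$, so we recover $x^+ = y^+$ and $x^* = y^*$, whence $x = y$ by regularity (M) of the original double $p$-algebra. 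The Stone identity \eqref{Eq:Stone} for the resulting algebra is inherited verbatim, since the operation ${^*}$ is unchanged. Assembling these pieces gives the proposition.
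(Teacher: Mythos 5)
Your overall plan is sound and reaches the conclusion, but by a different mechanism than the paper. The paper never verifies (DM1) or the De~Morgan law head-on: it computes $({\sim}x)^*=x^{++}$ and $({\sim}x)^+=x^{**}$, then $({\sim}{\sim}x)^*=x^*$ and $({\sim}{\sim}x)^+=x^+$, and invokes (M) to conclude ${\sim}{\sim}x=x$, and likewise settles ${\sim}(x\wedge y)={\sim}x\vee{\sim}y$ by comparing $^*$- and $^+$-images; the only place (D) is used is for (K), exactly as in your sketch. Your route --- proving (DM1) and (DM2) directly --- also works, but two of your attributions need correcting. First, (DM1) requires no regularity and no delicate chase: expanding gives ${\sim}{\sim}x=(x\wedge x^+)\vee x^{++}$, and since $x^{++}\le x$ and $x^+\vee x^{++}=1$ (the latter is just $a\vee a^+=1$ applied to $a=x^+$), distributivity yields $(x\vee x^{++})\wedge(x^+\vee x^{++})=x$; so your expected ``collapse'' is correct, but (D) is not what makes it happen. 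Second, regularity genuinely enters in (DM2): for $x\le y$ the instance of (D) you need is $y\wedge y^+\le x\vee x^*$ (not $x\wedge x^+\le y\vee y^*$ as written), which together with $y^+\le x^+$ and $y^*\le x^*$ gives ${\sim}y\le(x\vee x^*)\wedge(x^+\vee x^*)={\sim}x$; without regularity $\sim$ is an involution but fails to be antitone (the four-element chain is a counterexample). Your verification of (M) for the new algebra via $({\sim}x)^*=x^{++}$ and $x^+=x^{+++}$ is fine, though the paper gets regularity for free because the dual pseudocomplement is unique and unchanged. In short, the paper's (M)-based route buys uniformity (every equality reduced to comparing pseudocomplements), while your route, once the two points above are fixed, has the merit of showing exactly where regularity is and is not needed.
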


\begin{proof} By straightforward computation:
\begin{align*}
({\sim} x)^* &= (x \wedge x^+)^* \wedge x^{**} = (x^* \vee x^{+*}) \wedge x^{**} \\
             &=  (x^* \wedge x^{**}) \vee (x^{++} \wedge x^{**}) = x^{++}; \\
({\sim} x)^+ &= (x \wedge x^+)^+ \wedge x^{*+} = (x^+ \vee x^{++}) \wedge x^{**} = x^{**};\\
{\sim}\!{\sim}x &= ({\sim}x \wedge ({\sim}x)^+) \vee ({\sim}x)^* = (((x \wedge x^+) \vee x^*) \wedge x^{**}) \vee x^{++} \\
                &= (x \wedge x^+ \wedge x^{**}) \vee (x^* \wedge x^{**}) \vee x^{++} = (x \wedge x^+) \vee x^{++};\\
({\sim}\!{\sim}x)^* &= (x \wedge x^+)^* \wedge x^{++*} = (x^* \vee x^{+*}) \wedge x^+ = (x^* \vee x^{++}) \wedge x^+ \\
                    &= (x^* \wedge x^+) \vee (x^{++} \wedge x^+) = (x^* \wedge x^+) = x^*; \\
({\sim}\!{\sim}x)^+ &= (x \wedge x^+)^+ \wedge x^{+++} = (x^+ \vee x^{++}) \wedge x^+ = x^+.
\end{align*}
Because $({\sim}\!{\sim}x)^* = x^*$ and $({\sim}\!{\sim}x)^+ = x^+$, we obtain $x = {\sim}{\sim}x$ by (M). Now
\[
({\sim}x \vee {\sim y})^* = ({\sim}x)^* \wedge ({\sim}y)^* = x^{++} \wedge y^{++} = (x \wedge y)^{++} = ({\sim}(x \wedge y))^*
\]
and
\[
({\sim}x \vee {\sim y})^+ = ({\sim}x)^+ \wedge ({\sim}y)^+ = x^{**} \wedge y^{**} = (x \wedge y)^{**} = ({\sim}(x \wedge y))^+
\]
From this we have ${\sim}x \vee {\sim y} = {\sim}(x \wedge y)$. Therefore, $(L, \vee, \wedge, {\sim}, 0, 1)$ is a De~Morgan algebra.
Furthermore,
\[ x \wedge {\sim} x = x \wedge ( (x \wedge x^+) \vee x^*) = (x \wedge x^+) \vee (x \wedge x^*) = x \wedge x^+ \]
and
\[ y \vee {\sim} y = y \vee (y \wedge y^+) \vee y^* = y \vee y^*.\] 
We have 
\[ x \wedge {\sim} x = x \wedge x^+ \leq y \vee y^* = y \vee {\sim} y \]
by (D). So $(L, \vee, \wedge, {\sim}, ^*, 0, 1)$ is a pseudocomplemented Kleene algebra, which is regular and satisfies \eqref{Eq:Stone}
by assumption. 
\end{proof}

As we have noted, $x^* \leq x^+$ holds for any element $x$ of a double Stone algebra. This implies that
\[
(x \wedge x^+ ) \vee x^* = (x \vee x^*) \wedge (x^+ \vee x^*) = (x \vee x^*) \wedge x^+.
\]
Therefore, the operation $\sim$ may be defined also as ${\sim} x =  (x \vee x^*) \wedge x^+$ in
a regular double Stone algebra $(L, \vee, \wedge, {^*}, {^+}, 0, 1)$.

Let $(L,\vee,\wedge,{\sim}, {^*}, 0,1)$ be a pseudocomplemented Kleene algebra. 
Then $L$ is a double pseudocomplemented lattice in which the pseudocomplements $^*$ and $^+$ determine each other.
In particular, ${\sim} x^+ = ({\sim} x)^*$. Therefore, if \eqref{Eq:Stone} is satisfied in $L$, then
\begin{align*}
 1 &= ({\sim} x)^* \vee ({\sim} x)^{**} = {\sim}(x^+) \vee ({\sim}(x^+))^* \\
   &=  {\sim}(x^+) \vee  {\sim}(x^{++}) = {\sim}(x^+ \wedge x^{++}).
\end{align*}
This means that $x^+ \wedge x^{++} = 0$ and \eqref{Eq:dualStone} is valid in $L$. Therefore, we can write the following
proposition.

\begin{proposition}
If $(L,\vee,\wedge,{\sim}, {^*}, 0,1)$ is a pseudocomplemented Kleene algebra satisfying \eqref{Eq:Stone}, then 
$(L,\vee,\wedge,^*,^+,0,1)$ is a double Stone algebra which is regular exactly when 
the pseudocomplemented Kleene algebra $(L,\vee,\wedge,{\sim}, {^*}, 0,1)$ is regular.
\end{proposition}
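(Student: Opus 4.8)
The plan is to establish the two assertions of the proposition in turn: first that $(L,\vee,\wedge,{^*},{^+},0,1)$ is a double Stone algebra, and then that its regularity is equivalent to the regularity of the pseudocomplemented Kleene algebra $(L,\vee,\wedge,{\sim},{^*},0,1)$.

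For the first assertion, recall from Section~\ref{Sec:Preliminaries} that every pseudocomplemented De~Morgan algebra, hence in particular every pseudocomplemented Kleene algebra, is already a double $p$-algebra whose underlying lattice is bounded and distributive, the dual pseudocomplement ${^+}$ being the operation determined by \eqref{Eq:StarPlus}. Thus it suffices to verify the two Stone identities. The identity \eqref{Eq:Stone} is the hypothesis. For the dual identity \eqref{Eq:dualStone} I would reuse the computation preceding the proposition: applying \eqref{Eq:Stone} to ${\sim}x$ and using \eqref{Eq:StarPlus} twice, so that $({\sim}x)^* = {\sim}(x^+)$ and $({\sim}x)^{**} = {\sim}(x^{++})$, together with the De~Morgan law, gives
\[ 1 = ({\sim}x)^* \vee ({\sim}x)^{**} = {\sim}(x^+) \vee {\sim}(x^{++}) = {\sim}(x^+ \wedge x^{++}), \]
whence $x^+ \wedge x^{++} = {\sim}1 = 0$. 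Therefore $(L,\vee,\wedge,{^*},{^+},0,1)$ is a distributive double $p$-algebra satisfying both Stone identities, i.e.\ a double Stone algebra.

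For the second assertion, the key point is that condition (M) for the double $p$-algebra $(L,\vee,\wedge,{^*},{^+},0,1)$, namely ``$x^* = y^*$ and $x^+ = y^+$ imply $x = y$'', coincides with the form of (M) for the pseudocomplemented De~Morgan algebra recalled in Section~\ref{Sec:Preliminaries}, namely ``$x^* = y^*$ and $({\sim}x)^* = ({\sim}y)^*$ imply $x = y$''. Indeed, by \eqref{Eq:StarPlus} we have $({\sim}x)^* = {\sim}(x^+)$ and $({\sim}y)^* = {\sim}(y^+)$, and since ${\sim}$ is a polarity and hence a bijection, the equality $({\sim}x)^* = ({\sim}y)^*$ holds if and only if $x^+ = y^+$. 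So the two instances of (M) are logically equivalent. By the convention adopted in Section~\ref{Sec:Preliminaries}, the pseudocomplemented Kleene algebra $(L,\vee,\wedge,{\sim},{^*},0,1)$ is regular precisely when it satisfies (M), and by definition the double Stone algebra $(L,\vee,\wedge,{^*},{^+},0,1)$ is regular precisely when it satisfies (M) (equivalently (D), since $L$ is distributive); combining these two facts with the equivalence just noted proves the claim.

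I do not anticipate a genuine obstacle here. The only step requiring a little care is the passage between the two forms of (M) — that is, justifying that $({\sim}x)^* = ({\sim}y)^*$ is equivalent to $x^+ = y^+$ — and this rests entirely on ${\sim}$ being injective.
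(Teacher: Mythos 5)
Your proposal is correct and follows essentially the same route as the paper: the dual Stone identity \eqref{Eq:dualStone} is obtained by exactly the computation $1 = ({\sim}x)^* \vee ({\sim}x)^{**} = {\sim}(x^+)\vee{\sim}(x^{++}) = {\sim}(x^+\wedge x^{++})$ that the paper gives just before the statement, and the regularity equivalence reduces, as you note, to the observation that via \eqref{Eq:StarPlus} and injectivity of ${\sim}$ the condition (M) for the double $p$-algebra is literally the same as the De~Morgan form of (M). The paper leaves that last identification implicit, so your spelling it out is a harmless (and welcome) elaboration rather than a different argument.
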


We have now shown that each regular double Stone algebra 
\[ \mathbb{L} = (L, \vee, \wedge, {^*}, {^+}, 0, 1)\]
defines a regular pseudocomplemented Kleene algebra $\mathbb{L}^\textrm{rpK}$ satisfying the identity $x^* \vee x^{**} = 1$, and each
regular pseudocomplemented Kleene algebra 
\[ \mathbb{K} = (K, \vee, \wedge, {\sim}, {^*}, 0, 1)\] satisfying  $x^* \vee x^{**} = 1$
defines a regular double Stone algebra $\mathbb{K}^\textrm{rdS}$. Our next proposition shows that the correspondences
$\mathbb{L} \mapsto \mathbb{L}^\textrm{rpK}$ and $\mathbb{K} \mapsto \mathbb{K}^\textrm{rdS}$ are one-to-one and mutually inverse.

\begin{proposition}\label{Prop:OneToOne}
Let $\mathbb{L}$ be a regular double Stone algebra and $\mathbb{K}$ be a regular pseudocomplemented Kleene algebra satisfying $x^* \vee x^{**} = 1$.
Then the following equalities hold:
\begin{enumerate}[\rm (a)]
\item $\mathbb{L} = (\mathbb{L}^\textrm{rpK})^\textrm{rdS}$;
\item $\mathbb{K} = (\mathbb{K}^\textrm{rdS})^\textrm{rpK}$.
\end{enumerate}
\end{proposition}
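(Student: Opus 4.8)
The plan is to handle the two parts separately, observing at the outset that in each round trip the lattice operations $\vee,\wedge$, the bounds $0,1$, and the pseudocomplement $^*$ are literally carried along without change; so the only thing that can possibly go wrong is the ``new'' operation produced by the second construction, and the whole argument reduces to verifying that this new operation agrees with the original data.

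For part (a), start from a regular double Stone algebra $\mathbb{L}=(L,\vee,\wedge,{^*},{^+},0,1)$. Passing to $\mathbb{L}^{\textrm{rpK}}$ keeps the lattice, $^*$, $0$, $1$ and adjoins ${\sim}x=(x\wedge x^+)\vee x^*$; by Proposition~\ref{Prop:Correspondence} this is a regular pseudocomplemented Kleene algebra satisfying \eqref{Eq:Stone}, so $(\mathbb{L}^{\textrm{rpK}})^{\textrm{rdS}}$ is defined, dropping $\sim$ and reinstating a dual pseudocomplement. The key observation is that the dual pseudocomplement of a bounded lattice is uniquely determined by the order alone, being the least $z$ with $x\vee z=1$ whenever it exists. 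Since $\mathbb{L}^{\textrm{rpK}}$ has exactly the same underlying lattice as $\mathbb{L}$, its dual pseudocomplement must be the original $^+$, and hence $(\mathbb{L}^{\textrm{rpK}})^{\textrm{rdS}}=\mathbb{L}$. (Alternatively one can recompute $^+$ from $x^+={\sim}(({\sim}x)^*)$: the proof of Proposition~\ref{Prop:Correspondence} gives $({\sim}x)^*=x^{++}$, and then ${\sim}(x^{++})=(x^{++}\wedge x^{+++})\vee x^{++*}=x^{++*}=x^+$, the middle term vanishing by \eqref{Eq:dualStone}.) Either way, part (a) is essentially immediate.

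For part (b), start from a regular pseudocomplemented Kleene algebra $\mathbb{K}=(K,\vee,\wedge,{\sim},{^*},0,1)$ satisfying \eqref{Eq:Stone}, with associated regular double Stone algebra $\mathbb{K}^{\textrm{rdS}}=(K,\vee,\wedge,{^*},{^+},0,1)$, where $x^+$ is the dual pseudocomplement (equivalently $x^+={\sim}(({\sim}x)^*)$ by \eqref{Eq:StarPlus}). Then $(\mathbb{K}^{\textrm{rdS}})^{\textrm{rpK}}$ agrees with $\mathbb{K}$ on the lattice, on $^*$, and on $0,1$, and carries the De~Morgan operation ${\sim}'x=(x\wedge x^+)\vee x^*$; everything reduces to proving ${\sim}'={\sim}$. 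The idea is to compute $^*$ and $^+$ of both ${\sim}x$ and ${\sim}'x$ and then invoke regularity, i.e.\ condition (M). From the proof of Proposition~\ref{Prop:Correspondence} one already has $({\sim}'x)^*=x^{++}$ and $({\sim}'x)^+=x^{**}$. For ${\sim}x$ itself, use \eqref{Eq:StarPlus} to rewrite $({\sim}x)^*={\sim}(x^+)$ and $({\sim}x)^+={\sim}(x^*)$, and then squeeze each factor via \eqref{Eq:Normality}: applied to $x^+$ it gives $(x^+)^*\leq{\sim}(x^+)\leq(x^+)^+$, which collapses to ${\sim}(x^+)=x^{++}$ because $x^{+*}=x^{++}$ in a double Stone algebra, and applied to $x^*$ it gives $(x^*)^*\leq{\sim}(x^*)\leq(x^*)^+$, which collapses to ${\sim}(x^*)=x^{**}$ because $x^{*+}=x^{**}$. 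Hence $({\sim}x)^*=x^{++}=({\sim}'x)^*$ and $({\sim}x)^+=x^{**}=({\sim}'x)^+$, and (M), applied in the regular double Stone algebra $\mathbb{K}^{\textrm{rdS}}$, forces ${\sim}x={\sim}'x$, so $\mathbb{K}=(\mathbb{K}^{\textrm{rdS}})^{\textrm{rpK}}$.

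The main point that needs care — really the only nontrivial step — is the use of the identities $x^{+*}=x^{++}$, $x^{*+}=x^{**}$, $x^+\wedge x^{++}=0$ that hold specifically in double Stone algebras and that make the inequalities from \eqref{Eq:Normality} collapse to equalities; these are exactly the identities established just before Proposition~\ref{Prop:Correspondence}. One should also confirm that it is legitimate to invoke (M) here, which it is, since $\mathbb{K}^{\textrm{rdS}}$ has the same underlying set and the same $^*$ and $^+$ as the structures in play and is regular by hypothesis. Everything else is routine bookkeeping: checking that $\vee,\wedge,{^*},0,1$ are genuinely untouched by each round trip.
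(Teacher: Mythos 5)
Your proof is correct and takes essentially the same route as the paper: part (b) is the paper's argument (compute $^*$ and $^+$ of both negations, collapse the inequalities from \eqref{Eq:Normality} using $x^{+*}=x^{++}$ and $x^{*+}=x^{**}$, then invoke (M) in the regular double Stone algebra). For part (a), your primary argument via order-theoretic uniqueness of the dual pseudocomplement is a legitimate shortcut (justified because \eqref{Eq:StarPlus} makes the operation produced by the rdS construction the genuine dual pseudocomplement of the unchanged underlying lattice), and your parenthetical direct computation is exactly the paper's.
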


\begin{proof}
Let us first note that the operations $\vee$, $\wedge$, $^*$, $0$, $1$ are immutable in these transformations, because they are in
the signature of the both algebras. 

(a) Assume  $\mathbb{L} = (L, \vee, \wedge, {^*}, {^+}, 0, 1)$ is a regular double Stone algebra. It defines a regular
pseudocomplemented Kleene algebra $\mathbb{L}^\textrm{rpK}$ in which the operation $\sim$ is defined by
\[ {\sim} x = (x \wedge x^+) \vee x^*.\]
In  $\mathbb{L}^\textrm{rpK}$, a dual pseudocomplement is defined in terms of this $\sim$ and $^*$ by $x^\oplus = {\sim}({\sim} x) ^*$.
Now
\[ x^\oplus = {\sim}({\sim} x)^* = {\sim} x^{++} = (x^{++} \wedge x^{+++}) \vee x^{+++} = x^{+++} = x^+.\]
This means that the algebras $\mathbb{L}$ and $(\mathbb{L}^\textrm{rpK})^\textrm{rdS}$ coincide.

\smallskip

(b) Let $\mathbb{K}$ be a regular pseudocomplemented Kleene algebra satisfying $x^* \vee x^{**} = 1$. The corresponding
regular double Stone algebra is $\mathbb{K}^\textrm{rdS}$ where the dual pseudocomplement is defined by 
$x^+ = {\sim} ({\sim}x)^*$. In $\mathbb{K}^\textrm{rdS}$, a Kleene negation is defined by
\[ \neg x = (x \wedge x^+) \vee x^* .\]
According to the proof of Proposition~\ref{Prop:Correspondence}, we have $(\neg x)^* = x^{++}$.
Because $\mathbb{K}^\textrm{rdS}$ is a double Stone algebra, $x^{++} = x^{+*}$. On the other hand, 
as a pseudocomplemented Kleene algebra $\mathbb{L}$ satisfies \eqref{Eq:Normality}, and therefore 
$$x^{+*} \leq {\sim} x^+ \leq x^{++}.$$
We have ${\sim} x^+ =  x^{++} = x^{+*}$. By definition,  ${\sim} x^+ = ({\sim}x)^*$. Hence,
\[ (\neg x)^* = x^{++} = {\sim} x^+ = ({\sim}x)^* . \]

Similarly, by the proof of Proposition~\ref{Prop:Correspondence}, $(\neg x)^+ = x^{**}$. Since
$\mathbb{K}^\textrm{rdS}$ is a double Stone algebra, we have $x^{*+} = x^{**}$. By  \eqref{Eq:Normality},
$x^{**} \leq {\sim} x^* \leq x^{*+}$, and therefore ${\sim} x^* = x^{*+} = x^{**}$.
Because ${\sim}x^* = ({\sim}x)^+$, we can write 
\[ ({\neg} x)^+ = x^{**} = ({\sim}x)^+.\]

We have now proved $(\neg x)^* = ({\sim}x)^*$ and $({\neg} x)^+ = ({\sim}x)^+$. Because $\mathbb{K}^\textrm{rdS}$
is a regular double Stone algebra, we have $\neg x = {\sim}x$. 
\end{proof}

Since there is a one-to-one correspondence between regular double Stone algebras and regular pseudocomplemented Kleene
algebras satisfying \eqref{Eq:Stone}, and the pseudocomplements and dual pseudocomplements in double Stone algebras are 
unique, we can write the following corollary.

\begin{corollary} \label{Cor:UniqueKleene}
In any regular pseudocomplemented Kleene algebra satisfying identity \eqref{Eq:Stone}, the operation $\sim$ is unique.
\end{corollary}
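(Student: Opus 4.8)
The plan is to read off the uniqueness directly from the one-to-one correspondence of Proposition~\ref{Prop:OneToOne}. Suppose $(L, \vee, \wedge, {\sim}_1, {^*}, 0, 1)$ and $(L, \vee, \wedge, {\sim}_2, {^*}, 0, 1)$ are two regular pseudocomplemented Kleene algebras built on the same bounded distributive lattice $(L,\vee,\wedge,0,1)$ with the same pseudocomplementation $^*$. First I would note that each of them is a double $p$-algebra, and that in a double $p$-algebra the dual pseudocomplement $^+$ is uniquely determined by the lattice order, being characterised by $z \geq x^+ \iff x \vee z = 1$. By the (unlabelled) proposition preceding Proposition~\ref{Prop:OneToOne}, each $\mathbb{K}_i$ therefore gives rise to a regular double Stone algebra $\mathbb{K}_i^{\textrm{rdS}} = (L,\vee,\wedge,{^*},{^+},0,1)$, and since $^*$ and $^+$ are both lattice-intrinsic, these two double Stone algebras are literally the same algebra $\mathbb{L}$.

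Next I would invoke Proposition~\ref{Prop:OneToOne}(b): $\mathbb{K}_i = (\mathbb{K}_i^{\textrm{rdS}})^{\textrm{rpK}} = \mathbb{L}^{\textrm{rpK}}$ for $i = 1, 2$. Unravelling the construction $\mathbb{L}^{\textrm{rpK}}$ from Proposition~\ref{Prop:Correspondence}, the Kleene negation of $\mathbb{L}^{\textrm{rpK}}$ is given by the single formula ${\sim}_i x = (x \wedge x^+) \vee x^*$, whence ${\sim}_1 = {\sim}_2$. Since the operations $^*$ and $^+$ occurring on the right-hand side are themselves determined by the lattice, this also makes explicit that $\sim$ is the unique operation turning a given regular double Stone algebra (equivalently, the underlying Stone $p$-algebra) into a regular pseudocomplemented Kleene algebra satisfying \eqref{Eq:Stone}.

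The only step needing a moment's care, and the closest thing to an obstacle, is the claim that the $^+$ produced when passing from $\mathbb{K}_i$ to $\mathbb{K}_i^{\textrm{rdS}}$ does not covertly depend on ${\sim}_i$. Although $^+$ was introduced in the pseudocomplemented De~Morgan setting via the identity ${\sim}x^+ = ({\sim}x)^*$ of \eqref{Eq:StarPlus}, one checks that this $^+$ coincides with the genuine dual pseudocomplement of the lattice, which is purely order-theoretic; once this identification is recorded the argument closes at once.
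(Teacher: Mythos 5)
Your argument is correct and is essentially the paper's own: the corollary is deduced precisely from the one-to-one correspondence of Proposition~\ref{Prop:OneToOne} together with the fact that $^*$ and $^+$ are determined by the lattice, which forces ${\sim}x = (x \wedge x^+) \vee x^*$. Your explicit check that the $^+$ arising from \eqref{Eq:StarPlus} is the genuine (hence unique) dual pseudocomplement, independent of the choice of $\sim$, merely makes precise what the paper leaves implicit.
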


\begin{example}\label{Ex:RegularDoubleStone}
\begin{sloppypar}
In a regular pseudocomplemented Kleene algebra, the operation $\sim$ is not necessarily unique. 
Let us consider the regular pseudocomplemented Kleene algebra $(L,\vee,\wedge,{\sim}, ^*,0,1)$ depicted in Figure~\ref{Fig:3x3}(a).
There are two ways to define the Kleene operation. The first way is 
\end{sloppypar}
\[ {\sim} 0 = 1, {\sim} a = g, {\sim} b = f, {\sim} d = d, \]
and the second is
\[ {\sim} 0 = 1, {\sim} a = f, {\sim} b = g, {\sim} d = d. \]
This is possible since $L$ does not satisfy \eqref{Eq:Stone}:
\[ a^* \vee a^{**} = b \vee b^* = b \vee a = d \neq 1.\]

The distributive bounded lattice in Figure~\ref{Fig:3x3}(b) is a well-known double Stone algebra. 
The only way to define a Kleene operation in
$L$ is by
\[ {\sim} 0 = 1, {\sim} a = g, {\sim} b = f, {\sim} c = e, {\sim} d = d. \]

\begin{figure}
\centering
\includegraphics[width=90mm]{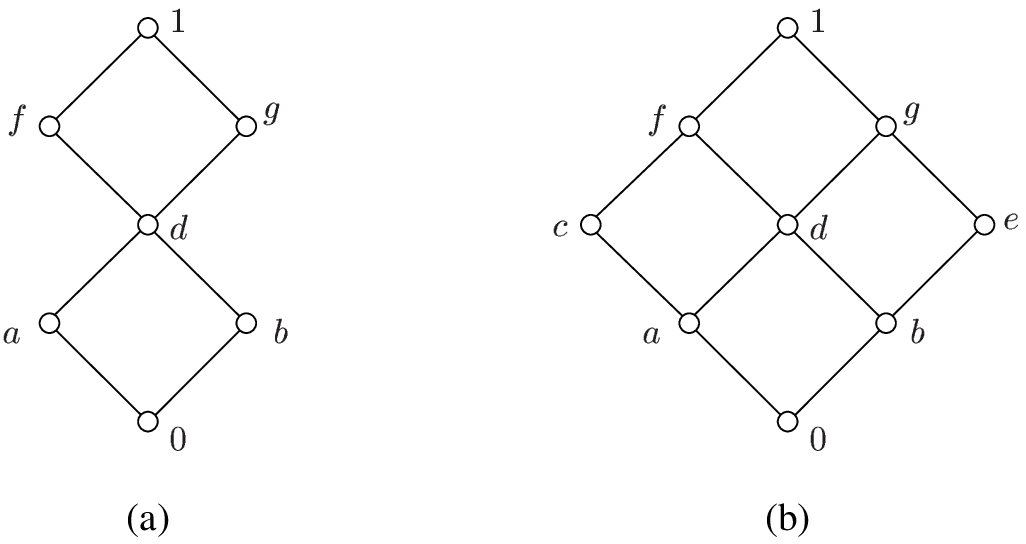}
\caption{\label{Fig:3x3}}
\end{figure}
\end{example}

We end this section by presenting a representation theorem for regular pseudocomplemented Kleene algebra
satisfying the Stone identity. Let $\mathbb{L} = (L, \vee, \wedge, {\sim}, {^*}, 0, 1)$ be a regular pseudocomplemented 
Kleene algebra satisfying $x^* \vee x^{**} = 1$. The unique regular double Stone
algebra corresponding $\mathbb{L}$ is $\mathbb{L}^\textrm{rdS}$. We may now apply the result by Comer mentioned in
Section~\ref{Sec:Intro}, which states that there exists a set $U$ and an equivalence $E$ on $U$ such that
$\mathbb{L}^\textrm{rdS}$ can be embedded to 
\[ \mathbb{RS} = (\mathit{RS},\vee,\wedge,^*,^+,0,1),\]
the rough set regular double Stone algebra defined by $E$. By the above, $\mathbb{RS}$ uniquely determines
a regular pseudocomplemented Kleene algebra $\mathbb{RS}^\textrm{rpK}$ satisfying $x^* \vee x^{**} = 1$. 
Obviously, the original regular pseudocomplemented Kleene algebra $\mathbb{L}$ can be embedded to $\mathbb{RS}^\textrm{rpK}$.
Therefore, we can write the following theorem. 

\begin{theorem} \label{Thm:MainB} 
Let $\mathbb{L}$ be a regular pseudocomplemented Kleene algebra satisfying $x^* \vee x^{**} = 1$.
Then, there exists a set $U$ and an equivalence $E$ on $U$ such that $\mathbb{L}$ is isomorphic to a subalgebra of 
\[ (\mathit{RS},\vee,\wedge,{\sim},^*,(\emptyset,\emptyset), (U,U)), \]
the pseudocomplemented Kleene algebra defined by $E$. 
\end{theorem}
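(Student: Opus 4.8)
The plan is to reduce the statement to Comer's representation theorem for regular double Stone algebras, using the one-to-one correspondence established in Proposition~\ref{Prop:OneToOne}.

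First I would replace $\mathbb{L}$ by the regular double Stone algebra $\mathbb{L}^{\textrm{rdS}} = (L,\vee,\wedge,{^*},{^+},0,1)$ that it determines, where the dual pseudocomplement is $x^+ = {\sim}({\sim}x)^*$; this is legitimate by the discussion preceding Proposition~\ref{Prop:OneToOne}, and by Proposition~\ref{Prop:OneToOne}(b) we recover $\mathbb{L}$ exactly as $(\mathbb{L}^{\textrm{rdS}})^{\textrm{rpK}}$, so that the Kleene negation of $\mathbb{L}$ is the lattice term $(x\wedge x^+)\vee x^*$ built from the two pseudocomplements. By Comer's theorem recalled in Section~\ref{Sec:Intro}, there is a set $U$, an equivalence $E$ on $U$, and an embedding $\varphi$ of $\mathbb{L}^{\textrm{rdS}}$ into the rough set regular double Stone algebra $\mathbb{RS} = (\mathit{RS},\vee,\wedge,{^*},{^+},0,1)$ determined by $E$; being a homomorphism of double Stone algebras, $\varphi$ preserves $\vee$, $\wedge$, ${^*}$, ${^+}$, $0$, and $1$.

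Next I would identify the target $(\mathit{RS},\vee,\wedge,{\sim},{^*},(\emptyset,\emptyset),(U,U))$ of the theorem with $\mathbb{RS}^{\textrm{rpK}}$. Since $\mathbb{RS}$ is a double Stone algebra it satisfies the Stone identity, so Proposition~\ref{Prop:Correspondence} produces from it a regular pseudocomplemented Kleene algebra $\mathbb{RS}^{\textrm{rpK}}$ whose negation is $(X\wedge X^+)\vee X^*$. Using the explicit formulas $(X^\DOWN,X^\UP)^* = (X^{c\DOWN},X^{c\DOWN})$ and $(X^\DOWN,X^\UP)^+ = (X^{c\UP},X^{c\UP})$ from Section~\ref{Sec:Intro}, a short computation shows that this term equals Iwi{\'n}ski's polarity $(X^\DOWN,X^\UP)\mapsto(X^{c\DOWN},X^{c\UP})$; alternatively, one checks directly that this polarity is a Kleene negation on $\mathbb{RS}$ and invokes the uniqueness furnished by Corollary~\ref{Cor:UniqueKleene}.

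Finally, since the Kleene negation of both $\mathbb{L} = (\mathbb{L}^{\textrm{rdS}})^{\textrm{rpK}}$ and $\mathbb{RS}^{\textrm{rpK}}$ is the same double-Stone term $(x\wedge x^+)\vee x^*$, and $\varphi$ preserves $\vee$, $\wedge$, ${^*}$, ${^+}$, it automatically satisfies $\varphi({\sim}x) = {\sim}\varphi(x)$; combined with preservation of the remaining operations this exhibits $\varphi$ as an embedding of $\mathbb{L}$ into $\mathbb{RS}^{\textrm{rpK}}$, which is exactly the assertion. The step needing the most care is simply to make sure that Comer's embedding respects ${^+}$ — it does, because ${^+}$ lies in the signature of double Stone algebras — and that ${\sim}$ is, uniformly in both algebras, definable by a single term in $\{\vee,\wedge,{^*},{^+}\}$; everything else is routine bookkeeping.
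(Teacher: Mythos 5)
Your proposal is correct and follows essentially the same route as the paper: pass to $\mathbb{L}^{\textrm{rdS}}$ via Proposition~\ref{Prop:OneToOne}, embed it by Comer's theorem into the rough set regular double Stone algebra, and translate back to the Kleene signature. The only difference is that you spell out what the paper dismisses as ``obvious''---namely that the embedding preserves $\sim$ because $\sim$ is the term $(x\wedge x^{+})\vee x^{*}$ in both algebras (or, alternatively, by Corollary~\ref{Cor:UniqueKleene}), and your verification of this, including the identification of that term with Iwi\'nski's polarity on $\mathit{RS}$, is accurate.
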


\section{Kleene--Varlet spaces for regular pseudocomplemented Kleene algebras satisfying the Stone identity}

It is proved in \cite[Theorem 1]{Varlet1966} that a distributive pseudocomplemented lattice is a Stone lattice if and only if
every prime filter is contained in only one proper maximal filter. It is known that in a distributive lattice each maximal proper
filter is a (maximal) prime filter. This means that a distributive double $p$-algebra is a double Stone algebra if and only if 
each prime filter is included in a unique maximal prime filter and includes a unique minimal prime filter. 

If we combine this with the claim of Proposition~\ref{Prop:Varlet} stating that a distributive double $p$-algebra is
regular if and only if any chain of prime filters of $L$ has at most two elements, we have that 
a distributive double $p$-algebra is a regular double Stone algebra if and only if the family of its prime filters
is a disjoint union of chains of at most two elements. 
Notice that I.~D{\"u}ntsch and E.~Or{\l}owska considered in \cite{Duntsch2011}
so-called \emph{double Stone frames} $(X, \leq)$ which are partially ordered sets such that:
\begin{enumerate}[({F}1)]
 \item For every $x \in X$ there exists exactly one $y \in X$ such that $x \leq y$ and $y$ is maximal in $X$.
 \item For every $x \in X$ there exists exactly one $y \in X$ such that $x \geq y$ and $y$ is minimal in $X$.
\end{enumerate}
The second part of the proof of the following proposition is modified from the proof of
\cite[Theorem 4.5]{Duntsch2011}.

\begin{proposition} \label{Prop:DisjointVarlet}
Let $(X,\leq,g)$  be a Kleene--Varlet space. Then the  regular pseudocomplemented Klee\-ne algebra
$(\mathcal{U}(X), \cup, \cap, {\sim}, ^*, \emptyset, X)$ defined by $(X,\leq,g)$ satisfies
the Stone identity \eqref{Eq:Stone} if and only if $(X,\leq)$ is a union of disjoint chains of at most two elements.
\end{proposition}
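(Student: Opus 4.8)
The plan is to argue directly from the description $A^{*}=\{x\in X\mid {\uparrow}x\cap A=\emptyset\}$ recorded before Proposition~\ref{Prop:VarletRegular}. Since $(\mathcal{U}(X),\cup,\cap,{^*},\emptyset,X)$ is a $p$-algebra, $A^{*}\in\mathcal{U}(X)$ and $A^{**}=(A^{*})^{*}$ is computed by the same formula, so $x\in A^{*}$ iff ${\uparrow}x\cap A=\emptyset$ and $x\in A^{**}$ iff ${\uparrow}x\cap A^{*}=\emptyset$. Hence the Stone identity $A^{*}\cup A^{**}=X$ holds for every $A\in\mathcal{U}(X)$ if and only if: for every $A\in\mathcal{U}(X)$ and every $x\in X$, at least one of ${\uparrow}x\cap A$ and ${\uparrow}x\cap A^{*}$ is empty. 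I also use that, by (J4), every chain in $(X,\leq)$ has at most two elements, so every non-maximal element lies strictly below a (necessarily maximal) element, and ${\uparrow}x$ equals $\{x\}$ when $x$ is maximal and otherwise consists of $x$ together with the maximal elements above it.

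For the implication ($\Leftarrow$), assume $(X,\leq)$ is a union of disjoint chains with at most two elements. Then ${\uparrow}x$ equals $\{x\}$ if $x$ is maximal, and equals $\{x,y\}$ (with $y$ the top of the two-element chain through $x$, and nothing outside that chain comparable to $x$) otherwise. Fix $A\in\mathcal{U}(X)$ and $x\in X$, and suppose for contradiction that both ${\uparrow}x\cap A$ and ${\uparrow}x\cap A^{*}$ are nonempty; let $y$ be the greatest element of the chain ${\uparrow}x$ (possibly $y=x$). As $A$ and $A^{*}$ are up-sets that meet ${\uparrow}x$ and $y$ dominates every element of ${\uparrow}x$, we get $y\in A$ and $y\in A^{*}$. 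But $y$ is maximal, so ${\uparrow}y=\{y\}$, and then $y\in A^{*}$ forces $y\notin A$, a contradiction. Hence the reformulated condition holds and the Stone identity is satisfied.

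For the implication ($\Rightarrow$), assume the Stone identity holds. I first show that no element of $X$ has two distinct elements above it. Suppose $x<y_{1}$ and $x<y_{2}$ with $y_{1}\neq y_{2}$; by (J4) both $y_{1}$ and $y_{2}$ are maximal, so $A:={\uparrow}y_{1}=\{y_{1}\}\in\mathcal{U}(X)$ and $A^{*}=\{z\in X\mid z\nleq y_{1}\}$. Now $x\notin A^{*}$ because $x<y_{1}$; and $x\notin A^{**}$ because $y_{2}\in{\uparrow}x$ while $y_{2}\nleq y_{1}$ (since $y_{2}$ is maximal and $y_{2}\neq y_{1}$), so $y_{2}\in{\uparrow}x\cap A^{*}$. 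Thus $x\notin A^{*}\cup A^{**}$, contradicting the Stone identity. Consequently each ${\uparrow}x$ has at most one maximal element and hence, by (J4), is a chain with at most two elements. Since by Remark~\ref{Rem:Varlet}(a) the map $g$ is an order anti-isomorphism of $(X,\leq)$, the down-set ${\downarrow}x=g({\uparrow}g(x))$ is the $g$-image of a chain with at most two elements, hence is itself a chain with at most two elements; so no element of $X$ has two distinct elements below it either. Since moreover every chain has at most two elements, each element of $X$ is comparable to at most one other element, and therefore $(X,\leq)$ is a union of disjoint chains with at most two elements.

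The main obstacle is the ($\Rightarrow$) direction: one has to produce the right witnessing up-set. Taking $A={\uparrow}y_{1}$ works precisely because $x$ then escapes $A^{*}$ (being below $y_{1}$) and simultaneously escapes $A^{**}$ (the \emph{other} element $y_{2}$ above $x$ lands in ${\uparrow}x\cap A^{*}$). The remaining nontrivial point is the use of the polarity $g$ to transfer the statement ``no element has two elements above it'' to its order dual, which is exactly what upgrades the one-sided restriction to the full disjoint-chain conclusion.
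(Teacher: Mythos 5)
Your proof is correct, and the direction ``disjoint chains $\Rightarrow$ Stone identity'' is essentially the paper's own argument: you push both witnesses up to the top element of the chain through $x$ and land in $A\cap A^{*}$, exactly as the paper does with its maximal element $w$. In the other direction your organization genuinely differs. The paper shows that two two-element chains sharing an element coincide: it reduces the shared-bottom case to the shared-top case via $g$, and for $x_{1},x_{2}<y$ it proves $g(x_{1})=y=g(x_{2})$ by deriving, from the assumption $g(x_{1})\neq y$, a point outside $({\uparrow}g(x_{1}))^{*}\cup({\uparrow}g(x_{1}))^{**}$; injectivity of $g$ then gives $x_{1}=x_{2}$. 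You instead take as witness the principal up-set ${\uparrow}y_{1}$ of one of two distinct elements above $x$ and check directly that $x\notin({\uparrow}y_{1})^{*}\cup({\uparrow}y_{1})^{**}$, concluding that no element has two elements strictly above it; you then transport this through the polarity (via ${\downarrow}x=g[{\uparrow}g(x)]$) to exclude two elements below, and finish with (J4). Both proofs rest on the same mechanism --- a singleton up-set of a maximal element violating the Stone identity at a prescribed point --- but your choice of witness keeps the intermediate claims purely order-theoretic and is slightly more direct, whereas the paper's computation yields as a byproduct the structural fact that, under the Stone identity, every two-element chain is of the form $\{x,g(x)\}$, i.e.\ $g$ exchanges the bottom and the top of each chain.
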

 
\begin{proof} Suppose $(\mathcal{U}(X), \cup, \cap, {\sim}, ^*,\emptyset, X)$ satisfies the Stone identity \eqref{Eq:Stone}.
Because $(X,\leq,g)$ is a Kleene--Varlet space, each chain in $(X,\leq)$ has at most two elements. 
If $C = \{x\}$ is maximal chain of one element, then $x = g(x)$. By Remark~\ref{Rem:Varlet}(d),
$x$ is not comparable with any other element in $X$ and $C$ cannot have
common elements which other maximal chains. 

Next we show that if $C_1$ and $C_2$ are two-element chains
having a common element, then necessarily $C_1 = C_2$. Let us suppose that $C_{1} = \{x_1, y\}$ and $C_{2} = \{x_2,y\}$
such that either  (a) $x_1,x_2 < y$ or (b) $y < x_1,x_2$ in  Figure~\ref{Fig:overlap_chain} holds. 
Note that in (b), $g(x_{1}),g(x_{2})<g(y)$. Hence, it suffices to consider only (a).

\begin{figure}[h] 
\centering
\includegraphics[width=70mm]{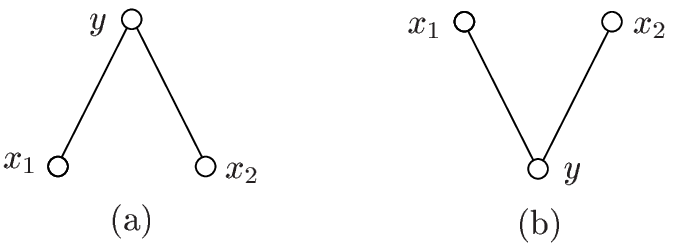}
\caption{\label{Fig:overlap_chain}}
\end{figure}

For case (a), we show that $g(x_{1}) = y$. 
Suppose for contradiction that $g(x_{1}) \neq y$.  As $x_{1} < y$, $x_1$ belongs to the ``lower level'' $\{x \in X \mid x < g(x)\}$ of $X$ and
${\uparrow} g(x_1) = \{ g(x_1) \}$ by Remark~\ref{Rem:Varlet}. 
By the definition of the pseudocomplement in $\mathcal{U}(X)$, 
\[ 
( {\uparrow} g(x_{1}) )^* = \{x \in X \mid {\uparrow} x \cap \{g(x_{1})\} = \emptyset\} = \{x \in X \mid x \nleq g(x_{1}) \}.
\] 
Because $x_1 < g(x_1)$, we get $x_{1} \notin ( {\uparrow} g(x_{1}) )^*$.  Observe that $y < g(x_{1})$ implies that $x_{1} < y < g(x_{1})$ 
is a chain of more than two elements. This is not possible, so $y \not < g(x_{1})$. Since $y \neq g(x_{1})$ holds by assumption, we obtain
$y \nleq g(x_{1})$ and $y \in ({\uparrow} g(x_{1}))^*$. As $x_1 < y$, we have ${\uparrow} x_{1} \cap ( {\uparrow} g(x_{1}))^* \neq \emptyset$. 
By definition,
\[ 
( {\uparrow} g(x_{1}) )^{**} = \{x \in X \mid {\uparrow} x \cap ({\uparrow} g(x_{1}) )^* = \emptyset \}.
\] 
Therefore, $x_{1} \notin ( {\uparrow} g(x_{1}) )^{* *}$. We already showed that $x_{1} \notin ( {\uparrow} g(x_{1}) )^*$.
Thus, 
\[ x_{1} \notin ( {\uparrow} g(x_{1}))^* \cup ({\uparrow} g(x_{1}))^{* *}.\] 
This contradicts $({\uparrow} g(x_{1}) )^* \cup ({\uparrow} g(x_{1})) ^{* *} = X$. Therefore, $g(x_{1}) \neq y$ is false
and so $g(x_{1})=y$. In an analogous way we can show that $g(x_{2})=y$.  Now $g(x_1) = y = g(x_2)$ implies $x_1 = x_2$ and $C_1 = C_2$. 
Thus, $(X,\leq)$ is a union of disjoint chains of at most two elements.

\smallskip%

Conversely, let $(X,\leq)$ be a union of disjoint chains of at most two elements and $A \in \mathcal{U}(X)$. 
Suppose that $x \notin A^* \cup A^{**}$. Because $x \notin A^*$, there is an element $y \in A$ such that
$x \leq y$. Similarly, $x \notin A^{**}$ means that there is  $z \in A^*$ such that $x \leq z$.
Because $(X,\leq)$ consists of disjoint chains of at most two elements, there is a chain $C$ such that
$x,y,z \in C$. Let $w$ be the biggest element in $C$. Then $y,z \leq w$ implies $w \in A \cap A^* = \emptyset$, a contradiction.
Hence, $A^* \cup A^{**} = X$.
\end{proof}

Let $(X,\leq,g)$ be a Kleene--Varlet space. For any $A\subseteq X$, let us denote 
\[ g[A] = \{g(a) \mid a \in A\}.\]
We can now write the following description of the pseudocomplement.

\begin{lemma} \label{Cor:StoneanCase}
Let $(X,\leq,g)$ be a Kleene--Varlet space such that $(X,\leq)$ is a union of disjoint chains of at most two elements.
For any $A \in \mathcal{U}(X)$,
\[ A^* = (A \cup g[A])^{c}.\]
\end{lemma}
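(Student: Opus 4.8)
The plan is to show that the two sets have the same complement in $\wp(X)$, i.e.\ that $X \setminus A^* = A \cup g[A]$. Recall from Section~\ref{Sec:VarletSpaces} that the pseudocomplement in $\mathcal{U}(X)$ is $A^* = A^{\UP c} = \{x \in X \mid {\uparrow}x \cap A = \emptyset\}$, so that $X \setminus A^* = A^\UP = \{x \in X \mid {\uparrow}x \cap A \ne \emptyset\}$. Moreover, by (J2) one has $x = g(g(x))$, whence $x \in g[A]$ if and only if $g(x) \in A$. Thus the whole claim reduces to the equivalence
\[ {\uparrow}x \cap A \ne \emptyset \iff x \in A \text{ or } g(x) \in A, \qquad x \in X. \]

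First I would isolate the structural fact that carries the argument: \emph{whenever $x,y \in X$ with $x < y$, we have $g(x) = y$}. Since $(X,\leq)$ is a union of disjoint chains of at most two elements --- so that comparable elements always lie in a common chain, the reading also used in Proposition~\ref{Prop:DisjointVarlet} --- the comparable pair $x,y$ lies in a single chain $C$ with $|C|\le 2$, forcing $C = \{x,y\}$; by (J3) the element $g(x)$ is comparable with $x$ and hence belongs to $C$, while $g(x)=x$ is excluded because by Remark~\ref{Rem:Varlet}(d) it would make $x$ incomparable with $y$. Hence $g(x)=y$.

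Granting this, the ``$\Leftarrow$'' direction is immediate: if $x\in A$ then $x\in{\uparrow}x\cap A$; and if $g(x)\in A$ then by (J3) either $x\le g(x)$, so $g(x)\in{\uparrow}x\cap A$, or $g(x)\le x$, so upward-closedness of $A$ gives $x\in A$ and again $x\in{\uparrow}x\cap A$. For ``$\Rightarrow$'', pick $y\in{\uparrow}x\cap A$: if $y=x$ then $x\in A$, while if $y\ne x$ then $x<y$ and the structural fact yields $g(x)=y\in A$. This establishes the displayed equivalence for all $x$, hence $X\setminus A^* = A\cup g[A]$, i.e.\ $A^* = (A\cup g[A])^c$.

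I do not anticipate a real obstacle; the only points needing care are the interpretation of ``union of disjoint chains'' (that cross-chain pairs are incomparable, which is exactly what powers the structural fact) and the slightly asymmetric subcase $g(x)\le x$ in the ``$\Leftarrow$'' direction, which is handled using that $A$ is upward-closed rather than any property of $g$.
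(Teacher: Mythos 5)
Your proof is correct and follows essentially the same route as the paper's: one inclusion via the (J3) dichotomy $x \leq g(x)$ or $g(x) \leq x$ together with upward-closedness of $A$, and the other via the observation that in a disjoint union of chains of at most two elements, $x \leq y$ forces $y = x$ or $y = g(x)$. The only difference is presentational (you argue through complements and spell out, via Remark~\ref{Rem:Varlet}(d), the structural fact $x < y \Rightarrow g(x) = y$ that the paper simply asserts), which is fine.
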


\begin{proof}
If $x \in A^*$, then ${\uparrow}x \cap A = \emptyset$. This directly gives $x \notin A$. We have $x \leq g(x)$ or $g(x) \leq x$.
If $x \leq g(x)$, then ${\uparrow}x \cap A = \emptyset$ gives $g(x) \notin A$. If $g(x) \leq x$, then $g(x) \in A$ gives
$x \in A$, a contradiction. Thus, $g(x) \notin A$ and $x \notin g[A]$. We have shown $x \in (A \cup g[A])^{c}$.

Conversely, assume $x \notin A^*$. Then there is an element $y \in A$ such that $x \leq y$. Because $(X,\leq)$ consists
of disjoint chains of at most two elements,  $y = x$ or $y = g(x)$. This means $x \in A$ or $g(x) \in A$, that is,
$x \notin (A \cup g[A])^c$.
\end{proof}

We end this work showing the connections between regular pseudocomplemented Klee\-ne algebras, Kleene--Varlet spaces of their prime filters, and 
Kleene algebras of the Alexandroff topologies of upward-closed of prime filters.

\begin{theorem}\label{Thm:StoneanCase}
Let $\mathbb{L} = (L,\vee,\wedge,{\sim},^*,0,1)$ be a regular pseudocomplemented Kleene algebra. 
The following are equivalent:
\begin{enumerate}[\rm (a)]
\item The algebra $\mathbb{L}$ satisfies $x^* \vee x^{**} = 1$.
\item The Kleene--Varlet space $(\mathcal{F}_p,\subseteq,g)$ determined by $\mathbb{L}$ is a union of disjoint chains of at most two elements.
\item The regular pseudocomplemented Kleene algebra $(\mathcal{U}(\mathcal{F}_p), \cup, \cap, {\sim}, {^*}, \emptyset, \mathcal{F}_p)$
defined by $(\mathcal{F}_p,\subseteq,g)$ satisfies the Stone identity \eqref{Eq:Stone}.
\end{enumerate}
\end{theorem}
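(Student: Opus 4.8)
The plan is to read the theorem as two equivalences glued together at (b): obtain (b) $\Leftrightarrow$ (c) essentially for free from Proposition~\ref{Prop:DisjointVarlet}, and then obtain (a) $\Leftrightarrow$ (b) by translating the Stone identity into a statement about the double $p$-algebra reduct of $\mathbb{L}$ and invoking the prime-filter characterisation of regular double Stone algebras that was recalled in this section. Everything needed is already available, so the proof is mostly careful bookkeeping rather than new computation.

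\emph{(b) $\Leftrightarrow$ (c).} By Lemma~\ref{Lem:PseudoToVarlet} the triple $(\mathcal{F}_p,\subseteq,g)$ is a Kleene--Varlet space, so Proposition~\ref{Prop:DisjointVarlet} applies verbatim with $(X,\leq,g)=(\mathcal{F}_p,\subseteq,g)$: the algebra $(\mathcal{U}(\mathcal{F}_p),\cup,\cap,{\sim},{}^*,\emptyset,\mathcal{F}_p)$ satisfies \eqref{Eq:Stone} if and only if $(\mathcal{F}_p,\subseteq)$ is a union of disjoint chains of at most two elements. Since $(\mathcal{F}_p,\subseteq,g)$ is already known to be a Kleene--Varlet space, the right-hand condition is exactly statement (b), which settles this equivalence.

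\emph{(a) $\Leftrightarrow$ (b).} If (a) holds, then because $\mathbb{L}$ is a \emph{regular} pseudocomplemented Kleene algebra, the (unlabelled) proposition immediately preceding Proposition~\ref{Prop:OneToOne} — which states that a pseudocomplemented Kleene algebra satisfying \eqref{Eq:Stone} is a double Stone algebra on its reduct $(L,\vee,\wedge,{}^*,{}^+,0,1)$, regular exactly when the Kleene algebra is — shows that this reduct is a regular double Stone algebra; conversely a double Stone algebra satisfies \eqref{Eq:Stone} by definition, so this reduct being a (regular) double Stone algebra returns (a). Now the filters, hence the prime filters, of the double $p$-algebra $(L,\vee,\wedge,{}^*,{}^+,0,1)$ are precisely the prime filters of its lattice reduct, i.e. the elements of $\mathcal{F}_p$, ordered by $\subseteq$. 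Applying the characterisation recalled just before Proposition~\ref{Prop:DisjointVarlet} (a distributive double $p$-algebra is a regular double Stone algebra iff its poset of prime filters is a disjoint union of chains of at most two elements), we conclude that (a) is equivalent to $(\mathcal{F}_p,\subseteq)$ being a disjoint union of chains of at most two elements, which — again using Lemma~\ref{Lem:PseudoToVarlet} to supply the Kleene--Varlet structure — is statement (b). Combining the two equivalences proves the theorem; alternatively one may close the cycle with (c) $\Rightarrow$ (a) by noting that $h$ of Proposition~\ref{Prop:Embedding} embeds $\mathbb{L}$ into the algebra in (c) as a subalgebra, and \eqref{Eq:Stone} is inherited by subalgebras.

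I do not expect a genuine obstacle here; the only place demanding attention is the step (a) $\Rightarrow$ (b), where one must (i) pass correctly between $\mathbb{L}$ as a pseudocomplemented Kleene algebra and as a double $p$-algebra, with $^+$ the derived dual pseudocomplement from \eqref{Eq:StarPlus}, (ii) identify the prime filters of the double $p$-algebra reduct with $\mathcal{F}_p$, and (iii) observe that ``being a union of disjoint chains of at most two elements'' is a property of the order $\subseteq$ alone, so nothing about $g$ needs to be verified beyond what Lemma~\ref{Lem:PseudoToVarlet} already provides.
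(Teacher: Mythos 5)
Your proposal is correct and uses essentially the same ingredients as the paper's proof: Lemma~\ref{Lem:PseudoToVarlet} plus Proposition~\ref{Prop:DisjointVarlet} for the equivalence of (b) and (c), and the prime-filter characterisation of regular double Stone algebras (via the unlabelled proposition and the remarks opening the section) for relating (a) to (b). The only difference is organisational --- you prove two biconditionals, closing (b)~$\Rightarrow$~(a) through the ``iff'' form of that characterisation, whereas the paper runs the cycle (a)~$\Rightarrow$~(b)~$\Rightarrow$~(c)~$\Rightarrow$~(a) and uses the embedding $h$ of Proposition~\ref{Prop:Embedding} for the last step, a route you also mention as an alternative.
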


\begin{proof}
(a)$\Rightarrow$(b): By Lemma~\ref{Lem:PseudoToVarlet}, ($\mathcal{F}_p,\subseteq,g)$ is a Kleene--Varlet space, which means
that each chain has at most two elements. If $\mathbb{L}$ satisfies $x^* \vee x^{**} = 1$, then $\mathbb{L}^\textrm{rdS}$
is a double Stone algebra. As we noted in the beginning of this section, a distributive double $p$-algebra is a 
double Stone algebra if and only if each prime filter is included in a unique maximal prime filter and includes a unique minimal prime filter. 
By combining these two observations, we have that \mbox{($\mathcal{F}_p,\subseteq)$} is a union of disjoint chains of at most two elements.

(b)$\Rightarrow$(c): This follows directly from Proposition~\ref{Prop:DisjointVarlet}. 

(c)$\Rightarrow$(a): The mapping $h(x) = \{P \in \mathcal{F}_p \mid x \in P\}$ is a homomorphism by Proposition~\ref{Prop:Embedding}.
This means that
\[ h(x^* \vee x^{**}) = h(x^*) \vee h(x^{**}) =  h(x)^* \cup h(x)^{**} = \mathcal{F}_p = h(1).\]
Because $h$ is also an embedding, it is an injection. This gives $x^* \vee x^{**} = 1$. 
\end{proof}

\providecommand{\bysame}{\leavevmode\hbox to3em{\hrulefill}\thinspace}
\providecommand{\MR}{\relax\ifhmode\unskip\space\fi MR }
\providecommand{\MRhref}[2]{%
  \href{http://www.ams.org/mathscinet-getitem?mr=#1}{#2}
}
\providecommand{\href}[2]{#2}

\end{document}